\documentclass[11pt]{article} 
\usepackage[utf8]{inputenc}
\usepackage[T1]{fontenc}
\usepackage{lmodern}
\usepackage[margin=1in]{geometry}
\usepackage{amsmath,amssymb,mathtools,amsthm}
\usepackage[numbers]{natbib}
\usepackage{hyperref}
\usepackage{enumitem}
\usepackage{parskip}

\theoremstyle{plain}
\newtheorem{theorem}{Theorem}[section]
\newtheorem{lemma}[theorem]{Lemma}
\newtheorem{proposition}[theorem]{Proposition}
\newtheorem{corollary}[theorem]{Corollary}
\newtheorem{assumption}[theorem]{Assumption}
\theoremstyle{remark}
\newtheorem{remark}[theorem]{Remark}
\newcommand{\R}{\mathbb{R}}
\newcommand{\E}{\mathbb{E}}
 
\newcommand{\Prob}{\mathbb{P}}

\newcommand{\Var}{Var}

\newcommand{\1}{\mathbf{1}}

\usepackage[most]{tcolorbox}
\usepackage{authblk}
\tcbset{
  colback=gray!5,
  colframe=gray!60!black,
  boxrule=0.5pt,
  arc=2pt,
  left=4pt,
  right=4pt,
  top=2pt,
  bottom=2pt,
  fonttitle=\bfseries,
  coltitle=black,
}

\newtcolorbox{propbox}[1][]{
  title={#1},
  enhanced,
  sharp corners,
}

\title{Nonsmooth Optimization with Zeroth Order Comparison Feedback}
\author[1]{Taha EL~BAKKALI}
\author[2]{El~Mahdi~Chayti}
\author[1]{Omar~Saadi}

\affil[1]{UM6P College of Computing}
\affil[2]{Machine Learning and Optimization Laboratory (MLO), EPFL}

\date{}

\begin{document}
\maketitle

\begin{abstract}
We study unconstrained optimization problems of nonsmooth, nonconvex Lipschitz functions, using only noisy pairwise comparisons governed by a known link function. Our goal is to compute a $(\delta,\varepsilon)$-Goldstein stationary point. We combine randomized smoothing with a novel unbiased reduction from comparisons to local value differences. By leveraging a Russian-roulette truncation on the Bernoulli-product expansion of the inverse link, we construct an exactly unbiased estimator for directional differences. This estimator has finite expected cost and variance scaling quadratically with the function gap, $\mathcal{O}(B^2)$, under mild conditions. Plugging this into the smoothed gradient identity enables a standard nonconvex SGD analysis, yielding explicit comparison-complexity bounds for common symmetric links such as logistic, probit, and cauchit.
\end{abstract}
\section{Introduction}
Comparison-based optimization addresses black-box settings where algorithms lack access to calibrated function values or gradients, receiving only ordinal feedback: an indication of whether $f(y)$ is preferred to $f(x)$. This model naturally captures dueling bandits, human-in-the-loop tuning, and RLHF-style preference signals (see, e.g., comparison-oracle-based direct alignment for LLMs~\cite{chen2025compo}). In smooth convex and strongly convex regimes, guarantees have been established under noisy comparison oracles~\cite{pmlr-v139-saha21b}, and subsequent work develops more realistic preference models where comparison reliability degrades near ties, while still yielding end-to-end optimization guarantees~\cite{saha2025dueling}. For smooth nonconvex objectives, the deterministic or noiseless setting has been studied using comparison and ranking feedback with stationarity guarantees~\cite{tang2024zerothorder,kadi2025on,bergou2020stochastic}, and recent work also shows that acceleration is possible even when the oracle only compares objective values~\cite{lobanov2024acceleration}; very recently, the smooth stochastic nonconvex case has also been analyzed, providing nonasymptotic convergence to approximate stationarity under noisy comparisons~\cite{bakkali2026noisy} with increasingly realistic noisy oracles.

In stark contrast, the \emph{nonsmooth} setting remains entirely unexplored. We aim to fill this gap by providing the first comparison-based optimization algorithms with convergence guarantees for nonsmooth convex and nonconvex objectives.

Addressing the nonsmooth nonconvex regime requires a departure from classical gradient-based stationarity, which is ill-defined for such problems. A robust alternative is \emph{Goldstein stationarity}~\citep{goldstein1977optimization}. For locally Lipschitz functions, this concept allows one to characterize approximate stationarity using the Clarke subdifferential~\citep{clarke1990optimization} by considering the convex hull of subgradients within a ball of radius~$\delta$ around a point.

Goldstein stationarity has recently emerged as the central solution concept for derivative-free optimization of Lipschitz nonsmooth objectives, particularly under \emph{zeroth-order value-oracle} access. In that setting, state-of-the-art algorithms combine randomized smoothing with two-point differences to form gradient estimators of a smoothed surrogate, offering finite-time guarantees for finding $(\delta,\varepsilon)$-Goldstein stationary points~\citep{lin2022gradient,pmlr-v195-jordan23a}. However, these results crucially rely on the magnitude of function differences. When algorithms are restricted to \emph{pairwise comparisons} that reveal only relative order, the magnitude information is lost, and we are not aware of any existing nonasymptotic guarantees for computing $(\delta,\varepsilon)$-Goldstein stationary points for general Lipschitz nonsmooth functions.

\paragraph{Comparison model and link function.}
We adopt the standard pairwise-comparison oracle model from preference-based optimization. Given two points $x, y \in \mathbb{R}^d$, the oracle returns a binary outcome $Y \in \{0,1\}$ satisfying
$
\Prob\left(Y = 1 \mid x, y\right) = \sigma\!\left(\Delta\right),
$
where $\Delta := f(y) - f(x)$. The event $Y = 1$ indicates that $y$ is preferred over $x$. Throughout, the link $\sigma:\R\to(0,1)$ is assumed known and strictly increasing. Our analysis applies to a broad class of symmetric links (where $\sigma(-t)=1-\sigma(t)$) whose inverse $g=\sigma^{-1}$ admits a Bernoulli-product expansion on the operated probability interval induced by the bounded-gap regime. Concretely, under the local query design $|\Delta|\le B$, we have $p=\sigma(\Delta)\in[p_-,p_+]$ with $p_\pm=\sigma(\pm B)$, and we assume that on $[p_-,p_+]$,
$
g(p)=\sum_{m\ge1} c_m\big(p^m-(1-p)^m\big)
$
for known coefficients $(c_m)_{m\ge1}$, with absolute convergence. This condition is satisfied by several classical choices, including the logistic link (where $c_m=1/m$), as well as probit and cauchit links on any operated interval $[p_-,p_+]$ bounded away from $\{0,1\}$.

\paragraph{Main contributions.}
We provide the first framework for minimizing nonsmooth nonconvex functions using only noisy comparisons. Our specific contributions are as follows:

\begin{itemize}[noitemsep,topsep=0pt]
    \item \textbf{Unbiased reduction to local values.} We develop a general method to recover unbiased estimates of function value differences from binary comparisons. For the class of links described above, we construct an estimator $\widehat\Delta$ by combining (i) Bernoulli-product estimators $A_m-B_m$, where $A_m=\1\{Y_1=\cdots=Y_m=1\}$ and $B_m=\1\{Y_1=\cdots=Y_m=0\}$ are derived from $m$ repeated comparisons, with (ii) a Russian-roulette truncation scheme. This construction preserves unbiasedness while ensuring finite expected cost.

    \item \textbf{Optimal variance scaling.} We prove that under a geometric truncation schedule and mild scaling conditions on the link, our estimator achieves a variance of $\E[\widehat\Delta^{\,2}]=\mathcal{O}(B^2)$. This quadratic scaling with the gap size $B$ is crucial, as it matches the variance profile of value-based oracles and prevents the noise explosion typically associated with inverse link approximations.

    \item \textbf{Convergence to Goldstein stationarity.} By substituting the true function difference $\Delta$ with our unbiased estimator $\widehat\Delta$ in the randomized smoothing scheme, we derive a stochastic gradient estimator for the smoothed function $f_\delta$. Since $\nabla f_\delta(x)\in\partial_\delta f(x)$, this yields a rigorous algorithm with finite expected comparison complexity for reaching $(\delta,\varepsilon)$-Goldstein stationarity, extending standard nonconvex SGD guarantees to the nonsmooth comparison-based setting.
\end{itemize}

\section{$(\delta,\varepsilon)$-Goldstein Stationarity Through Pairwise Comparisons}

\subsection{Goldstein stationarity and random smoothing}
\label{sec:goldstein_smoothing}

\paragraph{Goldstein $\delta$-subdifferential.}
For a locally Lipschitz function $f:\R^d\to\R$, define the Goldstein $\delta$-subdifferential as
$
\partial_\delta f(x)
\;:=\;
\mathrm{conv}\Big(\bigcup_{\|z-x\|\le \delta} \partial^C f(z)\Big),
$
where $\partial^C f(z)$ denotes the Clarke subdifferential and $\mathrm{conv}$ the convex hull.
We say that $x$ is $(\delta,\varepsilon)$-Goldstein stationary if
$\mathrm{dist}\big(0,\partial_\delta f(x)\big)\le \varepsilon$.

\paragraph{Random smoothing.}
Let $V\sim \mathrm{Unif}(\mathbb B)$ be uniform on the unit Euclidean ball
$\mathbb B:=\{v\in\R^d:\|v\|\le 1\}$. For $\delta>0$, define the smoothed function
$
f_\delta(x)
\;:=\;
\E\big[f(x+\delta V)\big].
$

\paragraph{A standard smoothing-based certificate.}
The following lemma is a well-known consequence of random smoothing and basic properties of
the Clarke subdifferential: the gradient of the ball-smoothed function $f_\delta$ belongs to
the Goldstein $\delta$-subdifferential of $f$, and hence directly certifies
$(\delta,\varepsilon)$-Goldstein stationarity whenever $\|\nabla f_\delta(x)\|\le \varepsilon$.
We include it for completeness; see \cite{lin2022gradient}.
\begin{propbox}
\begin{lemma}[Smoothed gradients certify Goldstein stationarity]\label{lem:smoothing_bridge_goldstein}
Assume $f$ is locally Lipschitz. Then $f_\delta$ is differentiable and, for every $x\in\R^d$,
$
\nabla f_\delta(x)\in \partial_\delta f(x).
$
Consequently,
$$
\mathrm{dist}\big(0,\partial_\delta f(x)\big)\le \|\nabla f_\delta(x)\|.
$$
In particular, any point $x$ satisfying $\|\nabla f_\delta(x)\|\le \varepsilon$
is $(\delta,\varepsilon)$-Goldstein stationary.
\end{lemma}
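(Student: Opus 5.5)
The plan is to prove differentiability of $f_\delta$ first, then show $\nabla f_\delta(x)$ is an average of Clarke subgradients taken over the ball $x+\delta\mathbb B$, and finally use convexity and closedness of $\partial_\delta f(x)$ to place that average inside it.

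\textbf{Step 1: differentiability.} Since $f$ is locally Lipschitz, it is Lipschitz with some constant $L$ on a compact neighborhood $K$ of $x+\delta\mathbb B$. By Rademacher's theorem, $f$ is differentiable almost everywhere, with $\|\nabla f\|\le L$ on $K$. Write
\[
f_\delta(y)=\frac{1}{\mathrm{vol}(\delta\mathbb B)}\int_{y+\delta\mathbb B} f(z)\,dz .
\]
This is the convolution of $f$ with the normalized indicator of $\delta\mathbb B$. Fix a direction $h$ and consider the difference quotients $(f(y+th+\delta v)-f(y+\delta v))/t$. These are bounded by $L\|h\|$, and for a.e.\ $v$ they converge to $\langle \nabla f(y+\delta v),h\rangle$. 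Dominated convergence then gives
\[
f_\delta'(y;h)=\langle \E[\nabla f(y+\delta V)],h\rangle .
\]
The resulting map $y\mapsto \E[\nabla f(y+\delta V)]$ is continuous in $y$, because it is the convolution of a bounded function with an integrable kernel and translation is continuous in $L^1$. So $f_\delta$ is $C^1$, with
\[
\nabla f_\delta(x)=\E[\nabla f(x+\delta V)].
\]
Alternatively, one can cite the divergence-theorem form $\nabla f_\delta(x)=\frac{d}{\delta}\E[f(x+\delta U)U]$ with $U$ uniform on the sphere. The expectation form is what we need below.

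\textbf{Step 2: membership.} At every point $z$ where $f$ is differentiable, $\nabla f(z)\in\partial^C f(z)$; this follows from Clarke's gradient characterization, or directly from the definition of the Clarke generalized derivative. Hence $\nabla f(x+\delta V)\in S:=\bigcup_{\|z-x\|\le\delta}\partial^C f(z)$ almost surely. The set $S$ is bounded by $L$. It is also closed: this uses upper semicontinuity and compact-valuedness of $\partial^C f$ together with compactness of the ball. Thus $S$ is compact, and $\partial_\delta f(x)=\mathrm{conv}(S)$ is compact and convex, since the convex hull of a compact set in $\R^d$ is compact by Carathéodory.

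The expectation of a random vector taking values a.s.\ in a closed convex set lies in that set. To see this, suppose $\E[\nabla f(x+\delta V)]\notin\partial_\delta f(x)$. Separate it by a hyperplane: there is $a$ with $\langle a,\E[\nabla f(x+\delta V)]\rangle>\sup_{s\in \partial_\delta f(x)}\langle a,s\rangle$. But $\langle a,\nabla f(x+\delta V)\rangle$ is a.s.\ at most that supremum, so its expectation is too, a contradiction. Therefore $\nabla f_\delta(x)\in\partial_\delta f(x)$.

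\textbf{Step 3: consequences.} Since $\nabla f_\delta(x)\in\partial_\delta f(x)$, we get $\mathrm{dist}(0,\partial_\delta f(x))\le\|\nabla f_\delta(x)-0\|$. The $(\delta,\varepsilon)$ certificate follows immediately.

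\textbf{Main obstacle.} The delicate point is Step 1: justifying the interchange of derivative and expectation so that $\nabla f_\delta$ equals the average of a.e.\ gradients. This needs Rademacher's theorem plus dominated convergence, and one must note that $f$ restricted to a line is absolutely continuous so the pointwise limits hold for a.e.\ $v$. The closedness of $S$, needed to pass from a.e.\ membership to membership of the mean, is the secondary technical point. I would handle it via the closed-graph property of the Clarke subdifferential.
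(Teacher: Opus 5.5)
Your proof is correct and complete. The paper gives no argument of its own for this lemma and defers to \cite{lin2022gradient}, and your route is the standard one used there:
\begin{itemize}
\item Rademacher's theorem plus dominated convergence give $\nabla f_\delta(x)=\E[\nabla f(x+\delta V)]$, with continuity of this map yielding $C^1$.
\item $\nabla f(z)\in\partial^C f(z)$ at every point where $f$ is differentiable.
\item A separating-hyperplane argument against the compact convex set $\partial_\delta f(x)$ places the average inside it.
\end{itemize}
Your remark about absolute continuity along lines is not needed, since Rademacher's theorem already gives full differentiability almost everywhere.
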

\end{propbox}

\paragraph{How comparisons enter.}
Random smoothing admits a convenient two-point gradient formula: if
$u\sim \mathrm{Unif}(\mathbb S^{d-1})$, then
$
\nabla f_\delta(x)
\;=\;
\frac{d}{2\delta}\,
\E\!\left[\big(f(x+\delta u)-f(x-\delta u)\big)\,u\right].
$
This representation reduces the problem of estimating $\nabla f_\delta(x)$ to estimating the
\emph{directional value difference}
$
\Delta_u \;:=\; f(x+\delta u)-f(x-\delta u).
$
In particular, any estimator $\widehat{\Delta}_u$ satisfying
$\E[\widehat{\Delta}_u\mid u]=\Delta_u$ yields an unbiased gradient estimator
\[
\widehat g(x)
\;:=\;
\frac{d}{2\delta}\,\widehat{\Delta}_u\,u,
\qquad\text{so that}\qquad
\E[\widehat g(x)\mid x]=\nabla f_\delta(x),
\]
and can therefore be plugged into standard stochastic gradient methods for minimizing $f_\delta$. The remaining challenge is to construct $\widehat{\Delta}_u$ using only comparison feedback.

\subsection{Comparison model and link function}
\label{sec:link_model}

\paragraph{Comparison model.}
We access $f$ only through a pairwise-comparison oracle. Given any two points $x,y\in\R^d$, the
oracle returns a random variable $Y\in\{0,1\}$ such that
\[
\Prob(Y=1 \mid x,y) \;=\; \sigma\!\big(f(y)-f(x)\big),
\]
where $\sigma:\R\to(0,1)$ is a known and strictly increasing \emph{link function}. We interpret
$Y=1$ as ``$y$ is preferred to $x$''. Writing
$
\Delta := f(y)-f(x)$, and $
p := \sigma(\Delta),
$
repeated queries of the same pair yield i.i.d.\ samples
$Y_1,\dots,Y_m \overset{\mathrm{i.i.d.}}{\sim} \mathrm{Bernoulli}(p)$.

\paragraph{Operated probability range.}
Our algorithm queries only local pairs of the form $x-\delta u$ and $x+\delta u$, where
$u\in \mathbb S^{d-1}$. Define the directional gap
$
\Delta_u \;:=\; f(x+\delta u)-f(x-\delta u).
$
By assuming that $f$ is $L$-Lipschitz, then
$
|\Delta_u| \;\le\; 2L\delta.$
Set
$
B \;:=\; 2L\delta.
$
Then for every queried direction $u$, the corresponding success probability
$
p \;=\; \sigma(\Delta_u)
$
is confined to a fixed interval
\[
p \in [p_-,p_+],
\qquad
p_- \;:=\; \sigma(-B),
\qquad
p_+ \;:=\; \sigma(B).
\]
We call the interval $[p_-,p_+]$ the \emph{operated probability range}.

\section{Logistic link and unbiased gap estimation from comparisons}
\label{sec:logistic_rr}

\paragraph{Comparison oracle (logistic link).}
Fix $x,y\in\R^d$ and denote the gap
$\Delta \;:=\; f(y)-f(x).$
A comparison query at $(x,y)$ returns $Y\in\{0,1\}$ such that
\[
\Prob(Y=1\mid x,y)
\;=\;
\sigma_\tau(\Delta)
\;:=\;
\frac{1}{1+e^{-\Delta/\tau}}
\;=\;
\sigma(\Delta/\tau),
\qquad
\sigma(t):=\frac{1}{1+e^{-t}},
\]
where $\tau>0$ is the temperature (noise scale). We interpret $Y=1$ as ``$y$ is preferred to $x$''.

\paragraph{Bounded gaps via symmetric queries.}
We enforce bounded gaps by querying symmetric pairs. Given a radius $\delta>0$ and $u\in S^{d-1}$, we compare
$
x+\delta u \quad \text{vs.} \quad x-\delta u.
$
If $f$ is $L$-Lipschitz, then
$
\big|f(x+\delta u)-f(x-\delta u)\big|
\;\le\;
2L\delta
\;=:\;
B.
$
Hence, for every queried pair, $|\Delta|\le B$ and the comparison probability
$p:=\sigma_\tau(\Delta)$ satisfies
\[
p\in[p_-,p_+],
\qquad
p_-:=\sigma_\tau(-B)=\sigma(-B/\tau),
\qquad
p_+:=\sigma_\tau(B)=\sigma(B/\tau),
\]
so $p$ is uniformly bounded away from $0$ and $1$ as soon as $B/\tau$ is bounded below.

\paragraph{Temperature scaling.}
Only the normalized gap $\Delta/\tau$ matters. When $\tau\gg B$, we have $|\Delta|/\tau\ll 1$ and thus
$p=\sigma(\Delta/\tau)\approx 1/2$, i.e., comparisons become nearly random.
In what follows it is natural (and mild) to assume $\tau\lesssim B$ (equivalently $B/\tau\gtrsim 1$),
so that comparisons retain a nontrivial signal-to-noise ratio on the range of queried gaps.

\paragraph{Why not use the plug-in logit estimator.}
A natural approach is to estimate $p$ by $\hat p=\frac{1}{m}\sum_{i=1}^m Y_i$ from $m$ repeated comparisons and output
$\tau\log\!\big(\frac{\hat p}{1-\hat p}\big)$. This is problematic because (i) the logit is nonlinear, hence biased,
and (ii) $\hat p\in\{0,1\}$ with nonzero probability, making the logit undefined and producing heavy tails.
We therefore construct an exactly unbiased estimator with controlled moments.

\subsection{A series representation}

\begin{propbox}
\begin{lemma}[Logit series]\label{lem:series}
For any $p\in(0,1)$,
\[
\log\frac{p}{1-p}
\;=\;
\sum_{m\ge 1}\frac{p^m-(1-p)^m}{m}.
\]
\end{lemma}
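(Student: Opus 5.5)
The identity follows from the Taylor series of the logarithm. For $q\in(0,1)$ recall
\[
-\log(1-q)=\sum_{m\ge1}\frac{q^m}{m},
\]
which converges absolutely because $0<q<1$ (the terms are dominated by $q^m$). The plan is to apply this expansion once with $q=1-p$ and once with $q=p$, and then subtract.

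For $p\in(0,1)$, both $p$ and $1-p$ lie in $(0,1)$. Taking $q=1-p$ gives $-\log p=\sum_{m\ge1}(1-p)^m/m$, and taking $q=p$ gives $-\log(1-p)=\sum_{m\ge1}p^m/m$. Therefore
\[
\log\frac{p}{1-p}=\log p-\log(1-p)=\sum_{m\ge1}\frac{p^m}{m}-\sum_{m\ge1}\frac{(1-p)^m}{m}=\sum_{m\ge1}\frac{p^m-(1-p)^m}{m}.
\]
Both series converge absolutely, so merging them termwise is legitimate. The combined series is also absolutely convergent, since its terms are bounded by $(p^m+(1-p)^m)/m$.

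There is no real obstacle. The only point to check is convergence, which needs $p$ strictly inside $(0,1)$. This matters for the later use of the lemma: on the operated range $[p_-,p_+]$ with $p_\pm=\sigma(\pm B/\tau)$, set $r:=\max(p_+,1-p_-)<1$. Then $|c_m(p^m-(1-p)^m)|\le r^m/m$ for all $p$ in the range, so the series converges uniformly and absolutely there. This uniform tail control is presumably what the subsequent Russian-roulette truncation will rely on.
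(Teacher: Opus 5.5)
Your proof is correct and is the paper's argument: expand $\log(1-t)=-\sum_{m\ge1}t^m/m$ at $t=1-p$ and at $t=p$, then subtract. Your extra remark on uniform absolute convergence over $[p_-,p_+]$ is also correct, but the lemma itself does not need it.
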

\end{propbox}

\begin{proof}
For $|t|<1$, $\log(1-t)=-\sum_{m\ge1} t^m/m$.
Apply with $t=1-p$ and $t=p$, and subtract.
\end{proof}

Combining Lemma~\ref{lem:series} with $\Delta=\tau\log\!\big(\frac{p}{1-p}\big)$ yields
$
\Delta
\;=\;
\tau \sum_{m\ge1}\frac{p^m-(1-p)^m}{m}.
$

\subsection{Estimating the series terms from comparisons}

Fix $(x,y)$ and repeat the \emph{same} comparison query $m$ times to obtain i.i.d.\ samples
$Y_1,\dots,Y_m\sim\mathrm{Bernoulli}(p)$. Define
$
A_m:=\1\{Y_1=\cdots=Y_m=1\}$
and
$B_m:=\1\{Y_1=\cdots=Y_m=0\}.$
Then $\E[A_m]=p^m$ and $\E[B_m]=(1-p)^m$, so $A_m-B_m$ is an unbiased estimator of $p^m-(1-p)^m$.

\subsection{Russian-roulette truncation}
\label{sub_trunc}
Let $M\in\{1,2,\dots\}$ be an integer-valued random variable sampled independently of all comparisons, and let
$q_m:=\Prob(M\ge m).$
For each $m\le M$, we use a \emph{fresh block} of $m$ repeated comparisons (independent across different $m$) to form
$A_m$ and $B_m$, and output the Russian-roulette estimator
$
\widehat\Delta
\;:=\;
\tau\sum_{m=1}^{M}\frac{A_m-B_m}{m\,q_m}.
$

\paragraph{Geometric truncation.}
Fix $\beta\in(0,1)$ and sample $M$ with $\Prob(M=m)=(1-\beta)\beta^{m-1}$.
Then $q_m=\Prob(M\ge m)=\beta^{m-1}$.

\subsection{Unbiasedness, expected cost, and second moment}

\begin{propbox}
\begin{lemma}[Unbiasedness]\label{lem:unbiased}
For any truncation $M$ independent of the comparison outcomes,
\[
\E[\widehat\Delta]=\Delta.
\]
\end{lemma}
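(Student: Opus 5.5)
The plan is to rewrite the random truncated sum as an infinite sum with indicator weights, $\widehat\Delta=\tau\sum_{m\ge1}\1\{M\ge m\}\,\frac{A_m-B_m}{m\,q_m}$. We then take expectations term by term and check that the interchange of expectation and summation is justified. Throughout, we assume $q_m>0$ for every $m$, i.e.\ $M$ has unbounded support, so the estimator is well defined; the geometric choice $q_m=\beta^{m-1}$ satisfies this. If $q_m=0$ for some $m$, the corresponding terms never appear and unbiasedness would fail in general.

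First, for each fixed $m$, compute $\E\big[\1\{M\ge m\}(A_m-B_m)\big]$. The variable $M$ is independent of all comparison outcomes, and the $m$-th block of comparisons is fresh. We may therefore realize all blocks $(A_m,B_m)_{m\ge1}$ on a common probability space and use only those with $m\le M$, without changing the law of $\widehat\Delta$. Independence then factorizes the expectation:
\[
\E\big[\1\{M\ge m\}(A_m-B_m)\big]=q_m\big(p^m-(1-p)^m\big).
\]
Dividing by $m q_m$ gives $\big(p^m-(1-p)^m\big)/m$.

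Second, summing over $m$ and invoking Lemma~\ref{lem:series} yields $\tau\log\frac{p}{1-p}=\Delta$, since $p=\sigma(\Delta/\tau)$.

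The main obstacle is justifying the exchange $\E\sum_m=\sum_m\E$. We apply Fubini--Tonelli to the absolute values, using $|A_m-B_m|=A_m+B_m$ because $A_m$ and $B_m$ are mutually exclusive indicators for $m\ge1$. This gives
\[
\sum_{m\ge1}\E\Big[\1\{M\ge m\}\frac{A_m+B_m}{m\,q_m}\Big]=\sum_{m\ge1}\frac{p^m+(1-p)^m}{m}
=-\log(1-p)-\log p<\infty
\]
for $p\in(0,1)$, which holds since $\sigma$ takes values in $(0,1)$. With this absolute summability, dominated convergence (or Fubini) legitimizes the term-by-term expectation.

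For finite $M$ the sum is finite pathwise, so no measurability issues arise, and the conclusion $\E[\widehat\Delta]=\Delta$ follows.
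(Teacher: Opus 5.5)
Your proof is correct and follows the paper's route: rewrite $\widehat\Delta$ as $\tau\sum_{m\ge1}\1\{M\ge m\}(A_m-B_m)/(m q_m)$, factor each term by independence, and sum using Lemma~\ref{lem:series}. Your Tonelli bound $\sum_{m}(p^m+(1-p)^m)/m=-\log\big(p(1-p)\big)<\infty$ justifies the interchange of expectation and summation, which the paper's logistic proof leaves implicit but does the same way in Lemma~\ref{lem:unbiased_simple}, and your caveat that $q_m>0$ is needed for every $m$ is a valid sharpening of ``any truncation $M$''.
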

\end{propbox}

\begin{proof}
We have
$
\widehat\Delta
=
\tau\sum_{m\ge1}\frac{\1\{M\ge m\}}{m\,q_m}(A_m-B_m).
$
Since $\E[\1\{M\ge m\}/q_m]=1$ and $\E[A_m-B_m]=p^m-(1-p)^m$, we obtain
$
\E[\widehat\Delta]
=
\tau\sum_{m\ge1}\frac{p^m-(1-p)^m}{m}
=
\tau\log\frac{p}{1-p}
=
\Delta.
$
\end{proof}

\begin{propbox}
\begin{lemma}[Expected comparison cost]\label{lem:cost}
Under geometric truncation with parameter $\beta$, the total number of comparisons used to compute $\widehat\Delta$ is
$N=\sum_{m=1}^{M} m = \frac{M(M+1)}{2}$, and
$
\E[N]=\frac{1}{(1-\beta)^2}.
$
\end{lemma}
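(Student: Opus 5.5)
The plan is to compute $\E[M(M+1)/2]$ directly from the geometric law of $M$. The identity $N=\sum_{m=1}^M m$ follows from the construction in Section~\ref{sub_trunc}. For each $m\le M$ we draw a fresh block of exactly $m$ comparisons, and no other comparisons are made. Summing gives $N=M(M+1)/2$.

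For the expectation, we write $\E[N]=\tfrac12\big(\E[M^2]+\E[M]\big)$ and evaluate the first two moments of $M$ with $\Prob(M=m)=(1-\beta)\beta^{m-1}$. The standard facts are $\E[M]=1/(1-\beta)$ and $\E[M(M+1)]=\sum_{m\ge1}m(m+1)(1-\beta)\beta^{m-1}$. A cleaner route is the identity
\[
\sum_{m\ge1} m(m+1)\beta^{m-1}=\frac{d^2}{d\beta^2}\sum_{m\ge1}\beta^{m+1}=\frac{d^2}{d\beta^2}\frac{\beta^2}{1-\beta}=\frac{2}{(1-\beta)^3},
\]
where the last equality is a routine second derivative. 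Alternatively, differentiate $\sum_{m\ge0}\beta^{m}=1/(1-\beta)$ twice and reindex. Multiplying by $(1-\beta)/2$ gives $\E[N]=1/(1-\beta)^2$.

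A third option uses tail sums: $N=\sum_{m\ge1} m\,\1\{M\ge m\}$, so $\E[N]=\sum_{m\ge1} m\,q_m=\sum_{m\ge1} m\beta^{m-1}=1/(1-\beta)^2$. This is the shortest argument, and I would present it as the main proof, with the moment computation as a check. Exchanging expectation and sum is justified by Tonelli, since all terms are nonnegative.

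I see no real obstacle here. The only care needed is bookkeeping: each $m\le M$ consumes exactly $m$ fresh comparisons, and blocks are never reused across different $m$. Convergence of the series for $\beta\in(0,1)$ is immediate.
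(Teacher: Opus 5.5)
Your proposal is correct. The paper's proof is essentially your moment-based ``check'': it quotes $\E[M]=1/(1-\beta)$ and $\E[M^2]=(1+\beta)/(1-\beta)^2$ for the geometric law and averages them. Your preferred argument is different: you write $N=\sum_{m\ge1} m\,\1\{M\ge m\}$ and use Tonelli to get $\E[N]=\sum_{m\ge1} m\,q_m=\sum_{m\ge1} m\beta^{m-1}$.

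The tail-sum route works directly with the survival probabilities $q_m$ that already drive the estimator. It therefore adapts immediately to other truncation laws or block-size schedules. For instance, for the odd blocks of Section~\ref{sec:general_link_series} it gives $\E[N]=\sum_{k\ge1}(2k-1)\beta^{k-1}=(1+\beta)/(1-\beta)^2$ at once. The paper's route instead requires first writing $N$ as a polynomial in $M$ and then knowing the corresponding moments of $M$. For the lemma as stated, both are one-line computations and equally rigorous.
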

\end{propbox}

\begin{proof}
For geometric $M$, $\E[M]=1/(1-\beta)$ and $\E[M^2]=(1+\beta)/(1-\beta)^2$. Hence
$\E[M(M+1)/2]=(\E[M^2]+\E[M])/2=1/(1-\beta)^2$.
\end{proof}

\begin{propbox}
\begin{lemma}[Finite second moment]\label{lem:second}
Assume $|\Delta|\le B$ and geometric truncation $q_m=\beta^{m-1}$. Let
\[
\alpha:=\max\{p_+,\,1-p_-\}.
\]
If $\beta>\alpha$, then $\E[\widehat\Delta^{\,2}]<\infty$. Moreover,
\[
\E[\widehat\Delta^{\,2}]
\le
B^2
+
\tau^2 \sum_{m\ge1}\frac{p_+^m+(1-p_-)^m}{m^2\,q_m}
+
\tau^2\left(\sum_{m\ge1}\frac{p_+^m+(1-p_-)^m}{m\,q_m}\right)^2.
\]
\end{lemma}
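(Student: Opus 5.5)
The plan is to decompose the estimator into its unbiased part and centered fluctuations, using $\E[\widehat\Delta^{\,2}]=\Delta^2+\Var(\widehat\Delta)$ together with $\Delta^2\le B^2$ (Lemma~\ref{lem:unbiased}). Throughout I use the representation from the proof of Lemma~\ref{lem:unbiased}: writing $D_m:=A_m-B_m$, we have $\widehat\Delta=\tau\sum_{m\ge1}\frac{\1\{M\ge m\}}{m\,q_m}D_m$. Here the $D_m$ are mutually independent (fresh blocks), independent of $M$, and take values in $\{-1,0,1\}$. Since $A_mB_m=0$, we get $D_m^2=A_m+B_m$, so $\E[D_m^2]=p^m+(1-p)^m\le p_+^m+(1-p_-)^m$ for every $p\in[p_-,p_+]$.

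Rather than computing the variance exactly, I will bound the raw second moment directly. Expand $\widehat\Delta^{\,2}=\tau^2\sum_{m,n}\frac{\1\{M\ge m\}\1\{M\ge n\}}{mn\,q_mq_n}D_mD_n$ and split it into diagonal ($m=n$) and off-diagonal terms.

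\emph{Diagonal terms.} Here $\E[\1\{M\ge m\}]/q_m^2=1/q_m$. Hence the diagonal contributes $\tau^2\sum_m\frac{\E[D_m^2]}{m^2q_m}\le\tau^2\sum_m\frac{p_+^m+(1-p_-)^m}{m^2q_m}$, which is the second term of the bound.

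\emph{Off-diagonal terms.} For $m<n$ we have $\1\{M\ge m\}\1\{M\ge n\}=\1\{M\ge n\}$, and $\E[D_mD_n]=\E[D_m]\E[D_n]$ by independence. So the off-diagonal sum equals $\tau^2\sum_{m\ne n}\frac{q_{\max(m,n)}}{mn\,q_mq_n}\E[D_m]\E[D_n]$. Since $q$ is nonincreasing, $q_{\max(m,n)}/(q_mq_n)=1/q_{\min(m,n)}\le 1/(q_mq_n)$. Combined with $|\E[D_m]|\le p_+^m+(1-p_-)^m$, each term is bounded in absolute value by $\frac{a_ma_n}{mn\,q_mq_n}$ with $a_m:=p_+^m+(1-p_-)^m$. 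The off-diagonal sum is therefore at most $\tau^2\big(\sum_m\frac{a_m}{m\,q_m}\big)^2$, the third term. The leftover $B^2$ term gives slack; alternatively one can keep the signed cross terms and compare with $\Delta^2$. Either way the stated inequality holds, since all terms are nonnegative.

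\emph{Finiteness and interchange.} The step needing care, and the main obstacle, is justifying the interchange of expectation with the double series and checking that the bounding series converge. With $q_m=\beta^{m-1}$, we have $a_m/q_m\le 2\beta(\alpha/\beta)^m$. Since $\beta>\alpha$, this is geometrically summable, so both displayed series are finite. To justify the expansion, I will bound $\E\big[(\sum_m\frac{\1\{M\ge m\}}{mq_m}|D_m|)^2\big]$ by the same diagonal and off-diagonal computation applied to $|D_m|$. Note that $\E|D_m|=\E[D_m^2]=a_m$-type quantities, so this gives the same finite bound. Tonelli then applies to the nonnegative version, which yields absolute convergence and legitimizes the termwise expectations above via dominated convergence. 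This gives $\E[\widehat\Delta^{\,2}]<\infty$ together with the stated inequality.
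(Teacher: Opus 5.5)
Your proof is correct, but it organizes the computation differently from the paper. The paper writes $\E[\widehat\Delta^{\,2}]=\Var(\widehat\Delta)+\Delta^2$ and applies the law of total variance conditionally on $M$. It bounds $\E[\Var(\widehat\Delta\mid M)]$ by $\tau^2\sum_m q_m\E[Z_m^2]$, which is your diagonal term. It bounds $\Var(\E[\widehat\Delta\mid M])$ by $\E[(\E[\widehat\Delta\mid M])^2]\le\tau^2(\sum_m|\mu_m|)^2$, which has the same final form as your off-diagonal bound. It then adds $\Delta^2\le B^2$, and asserts convergence in one line from $\beta>\alpha$.

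You instead expand the raw second moment into diagonal and off-diagonal pairs, using $\E[\1\{M\ge m\}\1\{M\ge n\}]=q_{\max(m,n)}$ and $q_{\max(m,n)}/(q_mq_n)=1/q_{\min(m,n)}\le 1/(q_mq_n)$. This buys two things.

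\begin{enumerate}[noitemsep,topsep=0pt]
\item It shows the $B^2$ term is pure slack. Indeed $\E[\widehat\Delta^{\,2}]=\E[\Var(\widehat\Delta\mid M)]+\E[(\E[\widehat\Delta\mid M])^2]$ holds exactly. The paper's step $\Var(\E[\widehat\Delta\mid M])\le\E[(\E[\widehat\Delta\mid M])^2]$ drops exactly $-\Delta^2$, which it then re-adds as $B^2$.
\item It supplies the integrability and interchange justification that the paper omits.
\end{enumerate}

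Your last paragraph already proves the whole statement on its own. Pointwise $|\widehat\Delta|\le\tau W$ with $W:=\sum_m\1\{M\ge m\}|D_m|/(m\,q_m)$. Your Tonelli computation bounds $\E[W^2]$ by the same two series, since $\E|D_m|=\E[D_m^2]\le p_+^m+(1-p_-)^m$ and the blocks are independent of each other and of $M$. So the signed diagonal/off-diagonal computation is not needed.

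The paper's conditional-variance organization is conceptually a bit more transparent: it separates within-block Bernoulli noise from truncation noise, and it is the template reused later for general links. It is, however, slightly loose (the redundant $B^2$) and less careful about convergence than your argument.
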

\end{propbox}

\begin{proof}
Let $Z_m:=(A_m-B_m)/(m q_m)$ and write $\widehat\Delta=\tau\sum_{m\ge1}\1\{M\ge m\}Z_m$.
Using $\E[\widehat\Delta^{\,2}]=\Var(\widehat\Delta)+(\E\widehat\Delta)^2$ and $\Delta^2\le B^2$, it suffices to bound
$\Var(\widehat\Delta)$. Decompose
\[
\Var(\widehat\Delta)
=
\E[\Var(\widehat\Delta\mid M)]
+
\Var(\E[\widehat\Delta\mid M]).
\]
Conditioned on $M$, the blocks across $m$ are independent, hence
\[
\Var(\widehat\Delta\mid M)
=
\tau^2\sum_{m=1}^{M}\Var(Z_m)
\le
\tau^2\sum_{m=1}^{M}\E[Z_m^2].
\]
Taking expectation in $M$ gives
$
\E[\Var(\widehat\Delta\mid M)]
\le
\tau^2\sum_{m\ge1} q_m\,\E[Z_m^2].
$
Since $(A_m-B_m)^2=A_m+B_m$,
\[
\E[Z_m^2]
=
\frac{\E[A_m+B_m]}{m^2 q_m^2}
=
\frac{p^m+(1-p)^m}{m^2 q_m^2}
\le
\frac{p_+^m+(1-p_-)^m}{m^2 q_m^2},
\]
which yields the first series term after multiplying by $q_m$.
For the second term, $\E[\widehat\Delta\mid M]=\tau\sum_{m=1}^{M}\mu_m$ where
$\mu_m=\E[Z_m]=(p^m-(1-p)^m)/(m q_m)$, so
\[
\Var(\E[\widehat\Delta\mid M])
\le
\E[(\E[\widehat\Delta\mid M])^2]
\le
\tau^2\left(\sum_{m\ge1}\frac{p_+^m+(1-p_-)^m}{m q_m}\right)^2.
\]
Finally, $\beta>\alpha$ ensures convergence since $q_m=\beta^{m-1}$.
\end{proof}

\begin{propbox}
\begin{corollary}[Second moment scales as $O(B^2)$]\label{cor:momentB}
Assume $|\Delta|\le B$ and geometric truncation $q_m=\beta^{m-1}$ with $\beta\in(p_+,1)$, where $p_+=\sigma_\tau(B)$.
If moreover $\tau=\mathcal{O}(B)$, then $\E[\widehat\Delta^{\,2}]=\mathcal{O}(B^2)$.
\end{corollary}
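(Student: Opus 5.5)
We plug the geometric schedule into Lemma~\ref{lem:second} and evaluate the two series explicitly. First note that for the logistic link $\sigma_\tau$ is symmetric, so $1-p_-=1-\sigma(-B/\tau)=\sigma(B/\tau)=p_+$. Hence $\alpha=p_+$, and the hypothesis $\beta\in(p_+,1)$ is exactly the condition $\beta>\alpha$ of Lemma~\ref{lem:second}. That lemma therefore gives
\[
\E[\widehat\Delta^{\,2}]\le B^2+2\tau^2\sum_{m\ge1}\frac{p_+^m}{m^2\beta^{m-1}}+4\tau^2\Big(\sum_{m\ge1}\frac{p_+^m}{m\,\beta^{m-1}}\Big)^2 .
\]

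Next we write $r:=p_+/\beta\in(0,1)$, so that $p_+^m/\beta^{m-1}=\beta r^m$. The two series become $\beta\,\mathrm{Li}_2(r)$ and $\beta\log\frac{1}{1-r}$. Both are finite, and they depend only on $r$, and hence only on $B/\tau$ and $\beta$. This gives
\[
\E[\widehat\Delta^{\,2}]\le B^2+\tau^2\,C(\beta,B/\tau),\qquad C:=2\beta\,\mathrm{Li}_2(r)+4\beta^2\log^2\tfrac{1}{1-r},
\]
with $\mathrm{Li}_2(r)\le\pi^2/6$.

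The main obstacle is interpreting the hypothesis so that $C$ stays bounded. As $B/\tau$ grows, $p_+\to1$ and $r$ may approach $1$, which makes the logarithm blow up. I would read $\tau=\mathcal{O}(B)$ together with the implicit regime in which $\beta$ is chosen with a fixed margin, for instance $\beta\ge p_+ +\eta$, or equivalently $r\le r_0<1$ for a constant $r_0$. Under that reading $C\le 2\pi^2/6+4\log^2\frac{1}{1-r_0}$ is an absolute constant. Then $\tau^2C\le c^2B^2C$, where $\tau\le cB$, and we conclude $\E[\widehat\Delta^{\,2}]\le(1+c^2C)B^2=\mathcal{O}(B^2)$.

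A cleaner concrete choice is $\beta=\sqrt{p_+}$, for which $r=\sqrt{p_+}$. Then $\log\frac{1}{1-r}\lesssim\log\frac{1}{1-p_+}\approx B/\tau$ for large $B/\tau$. This would give a bound of order $\tau^2(B/\tau)^2=B^2$ even without a fixed margin, and I would remark on it as a robust alternative. I would note that the constant hidden in $\mathcal{O}(\cdot)$ depends on $\beta$ through $r$, and that the argument is otherwise routine.
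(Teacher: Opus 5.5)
Your proof is correct and follows the paper's argument. Both use symmetry to get $\alpha=p_+$ and $p_+^m+(1-p_-)^m=2p_+^m$, plug $q_m=\beta^{m-1}$ into Lemma~\ref{lem:second}, note that the two series are finite functions of $r=p_+/\beta<1$, and absorb $\tau^2$ into $B^2$ via $\tau=\mathcal{O}(B)$. Your explicit evaluation through $\mathrm{Li}_2(r)$ and $\log\frac{1}{1-r}$ is more careful than the paper's proof, which leaves implicit that the hidden constant is uniform only if $r$ stays bounded away from $1$. Your alternative choice $\beta=\sqrt{p_+}$ validly removes that caveat, since it gives $\tau\log\frac{1}{1-r}\le B+2\tau\log 2$.
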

\end{propbox}

\begin{proof}
For the logistic link, $p_- = 1-p_+$, hence $p_+^m+(1-p_-)^m=2p_+^m$.
With $q_m=\beta^{m-1}$ and $\beta\in(p_+,1)$, the two series in Lemma~\ref{lem:second} converge to finite constants
depending only on $p_+/\beta<1$. Therefore $\E[\widehat\Delta^{\,2}]\le B^2 + C\tau^2$ for some $C<\infty$.
Since $\tau=\mathcal{O}(B)$, we have $\tau^2=\mathcal{O}(B^2)$, which implies $\E[\widehat\Delta^{\,2}]=\mathcal{O}(B^2)$.
\end{proof}

\subsection{Plugging $\widehat\Delta$ into two-point smoothing}
\label{sec:plug_smoothing}

\paragraph{Smoothing objective.}
Fix $\delta>0$, we have
\[
\nabla f_\delta(x)
=
\frac{d}{\delta}\,\E_{u\sim \mathrm{Unif}(S^{d-1})}\big[f(x+\delta u)\,u\big]
=
\frac{d}{2\delta}\,\E_{u\sim \mathrm{Unif}(S^{d-1})}\big[(f(x+\delta u)-f(x-\delta u))\,u\big].
\]

\paragraph{Gradient estimator from comparisons.}
Sample $u\sim\mathrm{Unif}(S^{d-1})$ and estimate the symmetric gap
$
\Delta(x,u):=f(x+\delta u)-f(x-\delta u)
$
by $\widehat\Delta(x-\delta u,x+\delta u)$ using the procedure above (with fresh comparisons).
Define
$
G(x;u)
:=
\frac{d}{2\delta}\,\widehat\Delta(x-\delta u,\,x+\delta u)\,u.
$
By Lemma~\ref{lem:unbiased} and the smoothing identity, $\E[G(x;u)\mid x]=\nabla f_\delta(x)$.

Moreover, by Lipschitzness $|\Delta(x,u)|\le 2L\delta=:B$, so Corollary~\ref{cor:momentB} applies whenever
$\tau=\mathcal{O}(B)$, yielding $\E[\widehat\Delta(x-\delta u,x+\delta u)^2]=\mathcal{O}(B^2)=\mathcal{O}(L^2\delta^2)$.
Therefore,
\[
\E\big[\|G(x;u)\|^2 \,\big|\, x\big]
=
\frac{d^2}{4\delta^2}\,\E\big[\widehat\Delta(x-\delta u,x+\delta u)^2 \,\big|\, x\big]
=
\mathcal{O}(d^2 L^2).
\]
Finally, the expected number of comparisons used to form $\widehat\Delta$ is $\E[N]=1/(1-\beta)^2$ by
Lemma~\ref{lem:cost}, giving an explicit per-iteration comparison cost.

\section{General inverse-link expansions compatible with comparisons}
\label{sec:general_link_series}

This section shows that our comparison-only construction is \emph{not tied to the logistic link}.
The only role of the link is to convert the Bernoulli probability $p$ of a comparison outcome into the
latent gap $\Delta=\tau\,g(p)$ via the inverse link $g=\sigma^{-1}$. We explain a general condition on
$g$ under which $\Delta$ admits an \emph{unbiased} estimator using comparisons only.

\paragraph{Scaled comparison model and symmetry.}
Let $\sigma:\R\to(0,1)$ be strictly increasing and define the inverse link
$
g(p):=\sigma^{-1}(p).
$
We work in the scaled model
$
p=\sigma_\tau(\Delta):=\sigma(\Delta/\tau)$, where 
$
\Delta=\tau\,g(p).$ 
We assume the standard symmetry
$
g(1-p)=-g(p)
\,\,\, \Longleftrightarrow \,\,\,
\sigma(-t)=1-\sigma(t),
$
which holds for logistic, probit, and many common symmetric links.

\paragraph{Operated interval (what we control).}
By query design we restrict to gaps $|\Delta|\le B$. Then
$
p=\sigma(\Delta/\tau)\in[p_-,p_+], 
p_-:=\sigma(-B/\tau)$ and  $p_+:=\sigma(B/\tau)$ 
with $0<p_-<p_+<1$. Under symmetry, $p_-=1-p_+$ so the interval is centered at $1/2$.
Define
$
\alpha:=\max\{p_+,\,1-p_-\}\in\Big(\frac12,1\Big).$
Intuitively, $\alpha$ measures how close the operated interval is to the boundary $\{0,1\}$; staying
away from $0$ and $1$ is what allows stable expansions of $g$.

\paragraph{Comparison-friendly basis.}
For $m\ge1$ define the Bernoulli-product polynomials
\[
b_m(p):=p^m-(1-p)^m .
\]
These functions are \emph{exactly} compatible with comparisons: if
$Y_1,\dots,Y_m\stackrel{\mathrm{i.i.d.}}{\sim}\mathrm{Bernoulli}(p)$ then
$
\widehat b_m
:=\1\{Y_1=\cdots=Y_m=1\}-\1\{Y_1=\cdots=Y_m=0\}
\,\,\, \text{satisfies}\,\,\,
\E[\widehat b_m]=b_m(p).
$
Moreover, for $p\in[p_-,p_+]$ we have the uniform bound
$
|b_m(p)|\le p^m+(1-p)^m \le 2\alpha^m.
$ We only need a mild regularity assumption ensuring that $g$ can be expanded on $[p_-,p_+]$ with
geometrically decaying coefficients.

\begin{assumption}[Analytic inverse on the operated interval]\label{ass:general_link_simple}
The inverse link $g=\sigma^{-1}$ is odd around $1/2$ and admits a holomorphic (complex-analytic) extension
to an open neighborhood of the compact interval $[p_-,p_+]$.
\end{assumption}

\begin{propbox}
\begin{proposition}[General-link Bernoulli-product expansion]\label{prop:general_link_simple}
Under Assumption~\ref{ass:general_link_simple}, there exist constants $C>0$ and $\rho\in(0,1)$
(depending only on $g$ and the interval $[p_-,p_+]$) and coefficients $(c_{2k-1})_{k\ge1}$ such that for all
$p\in[p_-,p_+]$,
$$
g(p)=\sum_{k\ge1} c_{2k-1}\,b_{2k-1}(p),
\qquad
|c_{2k-1}|\le C\,\rho^k.
$$
In particular, the above series converges absolutely and uniformly on $[p_-,p_+]$.
\end{proposition}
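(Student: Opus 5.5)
The plan is to reduce to approximating an odd analytic function on a symmetric interval by odd polynomials, and then to rewrite those polynomials in the Bernoulli-product basis. Substitute $s:=2p-1$, so $p=(1+s)/2$, $1-p=(1-s)/2$, and the operated interval becomes $s\in[-a,a]$ with $a:=2\alpha-1<1$. Set $h(s):=g\big(\tfrac{1+s}{2}\big)$. By Assumption~\ref{ass:general_link_simple}, $h$ is odd and holomorphic on an open neighborhood $U$ of $[-a,a]$. In the new variable,
\[
b_m(p)=2^{-m}\big((1+s)^m-(1-s)^m\big)=2^{1-m}\sum_{j\ \mathrm{odd}}\binom{m}{j}s^j .
\]
Hence for odd $m=2k-1$, $b_{2k-1}$ is an odd polynomial in $s$ of exact degree $2k-1$ with leading coefficient $2^{2-2k}$. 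So $\{b_1,b_3,\dots,b_{2K-1}\}$ spans the odd polynomials of degree $\le 2K-1$ through a triangular change of basis, and every odd polynomial has a unique finite expansion in the $b_{2k-1}$.

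\textbf{Approximation step.} Pick a Bernstein ellipse $E_R$ with foci $\pm a$ and parameter $R>1$ whose closure lies in $U$. By the Bernstein--Walsh theorem, the Chebyshev truncations $P_n$ of $h$ on $[-a,a]$ satisfy $\sup_{[-a,a]}|h-P_n|\le C_0R^{-n}$. Since $h$ is odd, only odd Chebyshev modes appear, so each $P_{2K-1}$ is odd. Telescoping gives
\[
h=\sum_{K\ge1}Q_K,\qquad Q_K:=P_{2K-1}-P_{2K-3}\ (P_{-1}:=0),
\]
where $Q_K$ is an odd polynomial of degree $\le 2K-1$ and $\|Q_K\|_{\infty,[-a,a]}\le 2C_0R^{-(2K-3)}$. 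Expanding each $Q_K=\sum_{k\le K}c^{(K)}_{2k-1}b_{2k-1}$ exactly and setting $c_{2k-1}:=\sum_{K\ge k}c^{(K)}_{2k-1}$ gives the desired series. The rearrangement needed to write $g(p)=\sum_k c_{2k-1}b_{2k-1}(p)$ is then justified by absolute convergence. That convergence follows from $|b_{2k-1}(p)|\le 2\alpha^{2k-1}$ once the coefficient bound below is in hand, and the same bound gives uniform convergence by the Weierstrass M-test.

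\textbf{Main obstacle: controlling the change of basis.} The bound $|c^{(K)}_{2k-1}|\le C_1\gamma^{K}\|Q_K\|_\infty$ for some growth factor $\gamma$ depending only on $a$ is where the real work lies. To get it, I would go through monomials. A degree-$n$ polynomial bounded by $1$ on $[-a,a]$ has monomial coefficients at most $(1+\sqrt2)^n a^{-n}$, by the Chebyshev coefficient bound, and inverting the triangular system $s^{2k-1}\mapsto b_{2k-1}$ costs at most a further geometric factor. The latter I would estimate by writing $s^j$ via the generating identity $(1+s)^m-(1-s)^m$ and bounding the inverse of the triangular binomial matrix by $c^n$ for an explicit $c$. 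This yields $|c_{2k-1}|\le C\sum_{K\ge k}(\gamma/R^2)^K$, which is geometric in $k$ provided $R^2>\gamma$. The same route requires $\gamma/R^2<1$ to make the sum converge at all, so this is a hidden largeness requirement on the analyticity domain. It is automatic for logistic, probit and cauchit, whose inverses are holomorphic on large regions. If a general neighborhood is too thin, I would fall back on a direct construction: write $h(s)=F(p)-F(1-p)$ with $F$ holomorphic on a disk of radius $>\alpha$ around $0$, obtained by solving this additive Cousin-type problem with a Cauchy integral over a contour around the interval. The coefficients $c_m$ are then the Taylor coefficients of $F$, and Cauchy's estimates give $|c_m|\le C\rho^m$ directly. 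I expect the estimate in this step, and making precise how $\rho$ depends on the size of the neighborhood, to be the heart of the proof.
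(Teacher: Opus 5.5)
The gap is the step you flag as the main obstacle, and it cannot be closed. With $\rho\in(0,1)$ the statement is false for the links the paper has in mind.

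Suppose $g=\sum_k c_{2k-1}b_{2k-1}$ on $[p_-,p_+]$ with $|c_{2k-1}|\le C\rho^k$ and $\rho<1$. Then $F(z):=\sum_k c_{2k-1}z^{2k-1}$ converges for $|z|<\rho^{-1/2}$, a radius exceeding $1$. Hence $F(z)-F(1-z)=\sum_k c_{2k-1}b_{2k-1}(z)$ is holomorphic on the convex set $\{|z|<\rho^{-1/2}\}\cap\{|1-z|<\rho^{-1/2}\}$, which contains $[0,1]$, so it is bounded on $(0,1)$. For the logistic, probit and cauchit links, $g$ is real-analytic on all of $(0,1)$. The identity theorem therefore extends the equality from $[p_-,p_+]$ to $(0,1)$, contradicting $g(p)\to\pm\infty$ at the endpoints. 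This is consistent with the paper's own logistic coefficients $c_m=1/m$, which do not decay geometrically.

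In particular, $R^2>\gamma$ is not ``automatic'' for these links, and for the logistic link you can check this directly. Here $h(s)=2\,\mathrm{artanh}(s)$ is singular at $s=\pm1$, which caps the Bernstein parameter at $R<(1+\sqrt{1-a^2})/a<2/a$. On the other hand, test your change-of-basis bound on the odd polynomial $T_{2K-1}(s/a)$: its leading coefficient $2^{2K-2}a^{1-2K}$ must be divided by the leading coefficient $2^{2-2K}$ of $b_{2K-1}$. This forces $\gamma\ge 16/a^2$, hence $\gamma/R^2>4$.

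The Cousin fallback fails for the same reason. Cauchy estimates on a disk of radius $r$ give $|c_{2k-1}|\lesssim r^{-(2k-1)}$, i.e.\ $\rho=r^{-2}$ in the statement's normalization. The argument above shows that no admissible $F$ is holomorphic on a disk of radius larger than $1$, so $\rho\ge 1$ is unavoidable. That step also needs more than Assumption~\ref{ass:general_link_simple}. Any $F$ holomorphic on $\{|z|<r\}$ with $F(z)-F(1-z)=g(z)$ continues $g$ analytically to the lens $\{|z|<r\}\cap\{|1-z|<r\}$. For $r>\alpha$ this lens contains the points $\tfrac12\pm i\sqrt{r^2-\tfrac14}$, at a fixed distance from the real axis, whereas the assumption only provides some, possibly very thin, neighborhood of $[p_-,p_+]$. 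Finally, nothing in your construction makes $F$ odd, which is what restricting to the $b_{2k-1}$ requires.

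The paper's proof sketch follows essentially your primary route: polynomial approximation, odd symmetrization, triangular expansion in the $b_{2k-1}$, passage to the limit. It simply asserts the geometric decay, without even showing that the $r$-dependent coefficients $c^{(r)}_{2k-1}$ converge as $r\to\infty$. Your telescoping $c_{2k-1}=\sum_{K\ge k}c^{(K)}_{2k-1}$ is the honest version of that step and locates the difficulty correctly. But the hole is the same, and it is fatal to the statement as written.

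What your fallback can prove is a repaired version. Suppose $g$ is holomorphic on a neighborhood of the closed lens $\{|z|\le r\}\cap\{|1-z|\le r\}$ for some $r>\alpha$; this holds, e.g., for the logistic or cauchit link with any $r\in(\alpha,1)$. Splitting the Cauchy integral over the boundary of the lens into its two circular arcs gives $g=F_1-F_2$, with $F_1$ holomorphic on $\{|z|<r\}$ and $F_2$ on $\{|1-z|<r\}$. Then $F(z):=\tfrac12\big(F_1(z)+F_2(1-z)\big)$ satisfies $F(z)-F(1-z)=g(z)$ by oddness of $g$. This yields $g=\sum_{m\ge1}c_m b_m$ on $[p_-,p_+]$ with $|c_m|\le C r'^{-m}$ for any $r'\in(\alpha,r)$, using all $b_m$ as in Section~\ref{sec:logistic_rr}. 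That growth rate exceeds $1$. It suffices for unbiasedness, since $\sum_m |c_m|\alpha^m<\infty$. But Corollary~\ref{cor:moment_geometric_simple} must be redone with it: the second-moment bounds then require $\beta>\alpha/r'^2$, hence a lens with $r>\sqrt{\alpha}$.
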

\end{propbox}

\begin{proof}[Proof sketch]
Since $g$ is holomorphic on a neighborhood of $[p_-,p_+]$, standard approximation theory yields a sequence of
polynomials $(P_r)_{r\ge1}$ such that $\sup_{p\in[p_-,p_+]}|g(p)-P_r(p)|$ decays geometrically in $r$.
By symmetry, $g$ is odd around $1/2$, so we may take $P_r$ to be odd around $1/2$ as well (e.g., by odd symmetrization). For each $r$, the family $\{b_{1},b_{3},\dots,b_{2r-1}\}$ spans the space of polynomials odd around $1/2$
of degree at most $2r-1$, hence there exist coefficients $(c^{(r)}_{2k-1})_{k\le r}$ such that
$P_r(p)=\sum_{k=1}^{r} c^{(r)}_{2k-1}\,b_{2k-1}(p)$.
Taking $r\to\infty$ and using the geometric approximation rate implies that these finite expansions converge
uniformly to $g$ on $[p_-,p_+]$, yielding an infinite series representation in the $b_{2k-1}$ basis.
Finally, analyticity on a neighborhood of $[p_-,p_+]$ implies geometric decay of the resulting coefficients,
i.e., $|c_{2k-1}|\le C\rho^k$ for some $\rho\in(0,1)$.
\end{proof}

\paragraph{Unbiased estimator for $\Delta$.}
Let $Y\sim\mathrm{Bernoulli}(p)$ represent a single comparison outcome.
For each $k\ge1$, draw an independent block of size $2k-1$:
$Y^{(k)}_1,\dots,Y^{(k)}_{2k-1}\stackrel{\mathrm{i.i.d.}}{\sim}\mathrm{Bernoulli}(p)$,
independent across $k$, and define
\[
A_{2k-1}:=\1\{Y^{(k)}_1=\cdots=Y^{(k)}_{2k-1}=1\},
\qquad
B_{2k-1}:=\1\{Y^{(k)}_1=\cdots=Y^{(k)}_{2k-1}=0\}.
\]
Then $\E[A_{2k-1}-B_{2k-1}]=b_{2k-1}(p)$.
Let $M$ be independent of all comparisons with survival probabilities $q_k:=\Prob(M\ge k)$.
Define the Russian-roulette estimator
$$
\widehat\Delta
:=
\tau\sum_{k=1}^{M}\frac{c_{2k-1}}{q_k}\,\big(A_{2k-1}-B_{2k-1}\big).
$$
\begin{propbox}
\begin{lemma}[Unbiasedness]\label{lem:unbiased_simple}
Assume $p\in[p_-,p_+]$ and $\sum_{k\ge1}|c_{2k-1}|\,\alpha^{2k-1}<\infty$.
Then $\E[\widehat\Delta]=\Delta$.
In particular, under Assumption~\ref{ass:general_link_simple} the estimator above is unbiased.
\end{lemma}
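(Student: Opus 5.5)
The plan is to write $\widehat\Delta$ as an infinite sum with indicators and exchange expectation and summation by dominated convergence (Fubini). Concretely, write
\[
\widehat\Delta=\tau\sum_{k\ge1}\frac{\1\{M\ge k\}}{q_k}\,c_{2k-1}\big(A_{2k-1}-B_{2k-1}\big).
\]
This requires $q_k>0$ for all $k$, which I take as part of the truncation design, e.g.\ $M$ with unbounded support. Since $M$ is independent of the comparison blocks, each term has expectation
\[
\E\big[\1\{M\ge k\}/q_k\big]\cdot c_{2k-1}\,\E[A_{2k-1}-B_{2k-1}]=c_{2k-1}\,b_{2k-1}(p).
\]

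The main obstacle is justifying the interchange $\E\sum=\sum\E$, since the summands are not nonnegative. For this I will apply Fubini--Tonelli to the absolute values. By independence, and using that $A_{2k-1},B_{2k-1}$ are mutually exclusive indicators,
\[
\E\Big[\frac{\1\{M\ge k\}}{q_k}|c_{2k-1}|\,|A_{2k-1}-B_{2k-1}|\Big]=|c_{2k-1}|\big(p^{2k-1}+(1-p)^{2k-1}\big)\le 2|c_{2k-1}|\alpha^{2k-1},
\]
where the last step uses $p\in[p_-,p_+]$, so that $p\le p_+\le\alpha$ and $1-p\le 1-p_-\le\alpha$. Summing over $k$ gives a quantity bounded by $2\sum_k|c_{2k-1}|\alpha^{2k-1}<\infty$ under the stated hypothesis. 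Hence $\sum_k$ of the absolute values is integrable, the series defining $\widehat\Delta$ converges almost surely (it is in fact a finite sum since $M<\infty$ a.s.), and expectation and summation commute.

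Therefore
\[
\E[\widehat\Delta]=\tau\sum_k c_{2k-1}b_{2k-1}(p)=\tau g(p)=\Delta,
\]
where the series identity is Proposition~\ref{prop:general_link_simple} and $\Delta=\tau g(p)$ holds by the scaled model.

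For the "in particular" clause, I will verify the summability hypothesis from Assumption~\ref{ass:general_link_simple} via Proposition~\ref{prop:general_link_simple}: $|c_{2k-1}|\le C\rho^k$, so $\sum_k|c_{2k-1}|\alpha^{2k-1}\le (C/\alpha)\sum_k(\rho\alpha^2)^k<\infty$ because $\rho\alpha^2<1$. This part is routine.
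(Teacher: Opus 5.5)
Your proof is correct and is essentially the paper's argument: rewrite $\widehat\Delta=\tau\sum_{k\ge1}\1\{M\ge k\}\,q_k^{-1}c_{2k-1}(A_{2k-1}-B_{2k-1})$, bound the sum of absolute expectations by $2\sum_k|c_{2k-1}|\alpha^{2k-1}$ using independence of $M$ from the blocks, interchange expectation and summation by Fubini--Tonelli, and conclude via $\sum_k c_{2k-1}b_{2k-1}(p)=g(p)$ and $\Delta=\tau g(p)$. Your remark that $q_k>0$ is needed and your explicit check of the ``in particular'' clause are details the paper leaves implicit; note, however, that this last check rests on the bound $|c_{2k-1}|\le C\rho^k$ with $\rho<1$ from Proposition~\ref{prop:general_link_simple}, which cannot hold for the logistic, probit or cauchit links (it would make $\sum_k c_{2k-1}b_{2k-1}$ analytic and bounded on a neighborhood of $[0,1]$, hence equal to $g$ on all of $(0,1)$ by the identity theorem, contradicting $g(p)\to\pm\infty$ at the endpoints), although your summation step itself only needs $\rho\alpha^2<1$.
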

\end{propbox}
\begin{proof}
Write the estimator as
$
\widehat\Delta
=
\tau\sum_{k\ge1}\frac{\1\{M\ge k\}}{q_k}\,c_{2k-1}\,\big(A_{2k-1}-B_{2k-1}\big)$, where $ q_k:=\Prob(M\ge k).
$
We first justify exchanging expectation and the infinite sum.

For each $k$, we have $|A_{2k-1}-B_{2k-1}|\le A_{2k-1}+B_{2k-1}$ and
$$
\E[A_{2k-1}+B_{2k-1}]=p^{2k-1}+(1-p)^{2k-1}\le 2\alpha^{2k-1}.
$$
Moreover, $\E\!\left[\frac{\1\{M\ge k\}}{q_k}\right]=1$ since $\E[\1\{M\ge k\}]=q_k$.
Therefore, by Tonelli's theorem,
\begin{align*}
\sum_{k\ge1}\E\!\left[\left|\frac{\1\{M\ge k\}}{q_k}\,c_{2k-1}\,(A_{2k-1}-B_{2k-1})\right|\right]
&\le
\sum_{k\ge1}|c_{2k-1}|\,
\E[A_{2k-1}+B_{2k-1}] \\
&\le
2\sum_{k\ge1}|c_{2k-1}|\,\alpha^{2k-1}
\;<\;\infty.
\end{align*}

Hence we may interchange expectation and summation:
\[
\E[\widehat\Delta]
=
\tau\sum_{k\ge1}c_{2k-1}\,
\E\!\left[\frac{\1\{M\ge k\}}{q_k}\right]\,
\E[A_{2k-1}-B_{2k-1}].
\]
Finally, the blocks are independent and $\E[A_{2k-1}-B_{2k-1}]=b_{2k-1}(p)$, so
\[
\E[\widehat\Delta]
=
\tau\sum_{k\ge1}c_{2k-1}\,b_{2k-1}(p)
=
\tau g(p)
=
\Delta.
\]
\end{proof}

\begin{remark}[Examples: probit and cauchit]\label{rem:general_link_examples_simple}
Assumption~\ref{ass:general_link_simple} holds for many classical symmetric links on any operated interval
$[p_-,p_+]\subset(0,1)$ bounded away from $\{0,1\}$.
\textbf{Probit:} $\sigma(t)=\Phi(t)$, so $g=\Phi^{-1}$ is holomorphic on a neighborhood of any compact
$[p_-,p_+]\subset(0,1)$.
\textbf{Cauchit:} $\sigma(t)=\tfrac12+\tfrac{1}{\pi}\arctan(t)$, so $g(p)=\tan(\pi(p-\tfrac12))$ is meromorphic with poles
at $p\in\mathbb{Z}+\{0,1\}$ and hence holomorphic on a neighborhood of any compact $[p_-,p_+]\subset(0,1)$.
\end{remark}

\begin{propbox}
\begin{lemma}[Second moment control]\label{lem:moment_simple}
Assume $|\Delta|\le B$ so that $p\in[p_-,p_+]$.
Define
$$
S_2:=\sum_{k\ge1}\frac{c_{2k-1}^2}{q_k}\,\alpha^{2k-1},
\qquad
S_1:=\sum_{k\ge1}\frac{|c_{2k-1}|}{\sqrt{q_k}}\,\alpha^{k-\frac12}.
$$
If $S_1<\infty$ and $S_2<\infty$, then
$$
\E[\widehat\Delta^{\,2}]
\;\le\;
B^2 \;+\; 2\tau^2 S_2 \;+\; 2\tau^2 S_1^2.
$$
In particular, if $\tau=\mathcal O(B)$ and $S_1,S_2=\mathcal O(1)$, then $\E[\widehat\Delta^{\,2}]=\mathcal O(B^2)$.
\end{lemma}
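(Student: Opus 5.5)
I will follow the route of Lemma~\ref{lem:second}: split the second moment as $\E[\widehat\Delta^{\,2}]=\Var(\widehat\Delta)+\Delta^2$, bound $\Delta^2\le B^2$, and control the variance by conditioning on $M$. Writing
\[
\widehat\Delta=\tau\sum_{k\ge1}\1\{M\ge k\}Z_k,\qquad Z_k:=\frac{c_{2k-1}}{q_k}\big(A_{2k-1}-B_{2k-1}\big),
\]
I will use the law of total variance,
\[
\Var(\widehat\Delta)=\E[\Var(\widehat\Delta\mid M)]+\Var(\E[\widehat\Delta\mid M]),
\]
and bound each piece by one of the two series in the statement. The factor $2$ in front of each term is slack, which I will keep as a safety margin. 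Unbiasedness (Lemma~\ref{lem:unbiased_simple}) gives $\E\widehat\Delta=\Delta$. Absolute summability of $\sum_k \E|Z_k|$ is guaranteed whenever $S_1<\infty$, because $|c|\alpha^{2k-1}\le |c|\alpha^{k-1/2}/\sqrt{q_k}$.

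\textbf{Conditional variance term.} Conditionally on $M$, the blocks are independent, so
\[
\Var(\widehat\Delta\mid M)=\tau^2\sum_{k\le M}\Var(Z_k)\le\tau^2\sum_{k\le M}\E Z_k^2 .
\]
Since $(A-B)^2=A+B$, we have $\E Z_k^2=c_{2k-1}^2\big(p^{2k-1}+(1-p)^{2k-1}\big)/q_k^2\le 2c_{2k-1}^2\alpha^{2k-1}/q_k^2$. Taking the expectation over $M$ multiplies the $k$-th term by $q_k$. This gives $\E[\Var(\widehat\Delta\mid M)]\le 2\tau^2 S_2$.

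\textbf{Variance of the conditional mean.} Here
\[
\E[\widehat\Delta\mid M]=\tau\sum_{k\le M}\frac{c_{2k-1}b_{2k-1}(p)}{q_k}.
\]
This term is the main obstacle. The naive bound used in Lemma~\ref{lem:second}, namely $(\sum_k |c|\,2\alpha^{2k-1}/q_k)^2$, involves $1/q_k$ rather than $1/\sqrt{q_k}$, so it is not $S_1^2$. To obtain the $\sqrt{q_k}$ scaling, I will expand the square instead:
\[
\E\Big[\Big(\sum_{k\le M}a_k\Big)^2\Big]\le\sum_{j,k}|a_j||a_k|\,\Prob(M\ge\max(j,k)),\qquad a_k:=\frac{c_{2k-1}b_{2k-1}(p)}{q_k}.
\]
I then use $\Prob(M\ge\max(j,k))=q_{\max(j,k)}\le\sqrt{q_jq_k}$, which holds because $q$ is nonincreasing. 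This bounds the sum by
\[
\Big(\sum_k |c_{2k-1}|\,|b_{2k-1}(p)|/\sqrt{q_k}\Big)^2 .
\]
Finally, $|b_{2k-1}(p)|\le 2\alpha^{2k-1}\le 2\alpha^{k-1/2}$ since $\alpha<1$. This yields a bound of $4\tau^2S_1^2$, which is off by a factor of $2$ from the claim.

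\textbf{Recovering the constant.} To tighten $4$ to $2$, I would first use the sharper bound $|b_{2k-1}(p)|\le\alpha^{2k-1}$, which holds since $|p^n-(1-p)^n|\le\max(p,1-p)^n$; this gives $\tau^2S_1^2$ and leaves margin. The same sharper bound is not available in the $S_2$ step, which uses $\E[A+B]$ rather than $|b|$, so there the factor $2$ is necessary and matches. Adding the two pieces to $\Delta^2\le B^2$ gives the claimed inequality. The final sentence of the lemma then follows directly: if $\tau=\mathcal O(B)$ and $S_1,S_2=\mathcal O(1)$, every term is $\mathcal O(B^2)$.
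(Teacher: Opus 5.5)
Your proof is correct and is essentially the paper's argument. You use the same law-of-total-variance split with an identical $2\tau^2S_2$ bound, and your estimate $\Prob(M\ge\max(j,k))=\min(q_j,q_k)\le\sqrt{q_jq_k}$ on the conditional mean is the same Cauchy--Schwarz/Minkowski step the paper takes via $\Var(\1\{M\ge k\})\le q_k$; both yield $\big(\sum_k\sqrt{q_k}\,|\mu_k|\big)^2$. The one real difference is that your pointwise bound $|b_{2k-1}(p)|\le\alpha^{2k-1}\le\alpha^{k-1/2}$ is sharper than the paper's Jensen bound $\sqrt2\,\alpha^{k-1/2}$, so you get $\tau^2S_1^2$ in place of $2\tau^2S_1^2$. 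One small fix: the summability needed for unbiasedness is $\sum_k|c_{2k-1}|\alpha^{2k-1}<\infty$ (i.e.\ $\sum_k\E|\1\{M\ge k\}Z_k|<\infty$), not $\sum_k\E|Z_k|<\infty$, though your displayed inequality does establish the former.
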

\end{propbox}

\begin{proof}
Let
$
X_k:=\frac{c_{2k-1}}{q_k}\,\big(A_{2k-1}-B_{2k-1}\big),
\qquad
\mu_k:=\E[X_k].
$
Then $\widehat\Delta=\tau\sum_{k\ge1}\1\{M\ge k\}X_k$ and $(\E[\widehat\Delta])^2=\Delta^2\le B^2$, hence
$$
\E[\widehat\Delta^{\,2}]\le B^2+\Var(\widehat\Delta).
$$
Conditioned on $M$, the blocks are independent, so
$$
\Var(\widehat\Delta)
=
\E\!\left[\Var(\widehat\Delta\mid M)\right]
+
\Var\!\left(\E[\widehat\Delta\mid M]\right).
$$

For the first term, using $\Var(X_k)\le \E[X_k^2]$ and $\E[\1\{M\ge k\}]=q_k$,
\[
\E\!\left[\Var(\widehat\Delta\mid M)\right]
\le
\tau^2\sum_{k\ge1} q_k\,\E[X_k^2].
\]
Moreover, $(A_{m}-B_{m})^2=A_m+B_m$ and
$\E[A_m+B_m]=p^m+(1-p)^m\le 2\alpha^m$, hence
$$
\E[X_k^2]
=
\frac{c_{2k-1}^2}{q_k^2}\,\E[A_{2k-1}+B_{2k-1}]
\le
\frac{2c_{2k-1}^2}{q_k^2}\,\alpha^{2k-1}.
$$
Therefore
$
\E\!\left[\Var(\widehat\Delta\mid M)\right]
\le
2\tau^2\sum_{k\ge1}\frac{c_{2k-1}^2}{q_k}\,\alpha^{2k-1}
=
2\tau^2 S_2.
$ For the second term, by Cauchy--Schwarz and $\Var(\1\{M\ge k\})\le q_k$,
\[
\Var\!\left(\E[\widehat\Delta\mid M]\right)
=
\tau^2\Var\!\left(\sum_{k\ge1}\1\{M\ge k\}\mu_k\right)
\le
\tau^2\Big(\sum_{k\ge1}\sqrt{q_k}\,|\mu_k|\Big)^2.
\]
Now $\mu_k=\frac{c_{2k-1}}{q_k}\,b_{2k-1}(p)$ and
\[
|b_{2k-1}(p)|
=|\E[A_{2k-1}-B_{2k-1}]|
\le
\E[(A_{2k-1}-B_{2k-1})^2]^{1/2}
=
\E[A_{2k-1}+B_{2k-1}]^{1/2}
\le
\sqrt{2}\,\alpha^{k-\frac12}.
\]
Hence
$
\sqrt{q_k}\,|\mu_k|
\le
\sqrt{2}\,\frac{|c_{2k-1}|}{\sqrt{q_k}}\,\alpha^{k-\frac12},
$
so
$
\Var\!\left(\E[\widehat\Delta\mid M]\right)
\le
2\tau^2\Big(\sum_{k\ge1}\frac{|c_{2k-1}|}{\sqrt{q_k}}\,\alpha^{k-\frac12}\Big)^2
=
2\tau^2 S_1^2.
$
Combining the two bounds yields
$
\Var(\widehat\Delta)\le 2\tau^2 S_2 + 2\tau^2 S_1^2.
$
\end{proof}

\begin{propbox}
\begin{corollary}[Geometric truncation gives a second-moment bound]\label{cor:moment_geometric_simple}
Assume Assumption~\ref{ass:general_link_simple} and $|\Delta|\le B$, so that $p\in[p_-,p_+]$ and $\alpha<1$.
Let $(c_{2k-1})_{k\ge1}$ be the coefficients provided by
Proposition~\ref{prop:general_link_simple}, so that $|c_{2k-1}|\le C\,\rho^k$ for some $C>0$ and $\rho\in(0,1)$.
Let $M$ be geometric with parameter $\beta\in(0,1)$, so that $q_k=\Prob(M\ge k)=\beta^{k-1}$.
If
$$
\beta>\alpha^2\rho^{2},
$$
then $\E[\widehat\Delta^{\,2}]\le B^2 + C_0\tau^2$ for a constant $C_0$ depending only on
$\alpha,\beta,C,\rho$. In particular, if $\tau=\mathcal O(B)$ then
$\E[\widehat\Delta^{\,2}]=\mathcal O(B^2)$.
\end{corollary}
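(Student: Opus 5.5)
The proof reduces to Lemma~\ref{lem:moment_simple}. That lemma already gives $\E[\widehat\Delta^{\,2}]\le B^2+2\tau^2S_2+2\tau^2S_1^2$ whenever $S_1,S_2<\infty$. So it suffices to show that, under geometric truncation $q_k=\beta^{k-1}$ and the coefficient bound $|c_{2k-1}|\le C\rho^k$ from Proposition~\ref{prop:general_link_simple}, both series are dominated by convergent geometric series whose ratio is $<1$ exactly when $\beta>\alpha^2\rho^2$. Then set $C_0:=2(\overline S_2+\overline S_1^2)$, where $\overline S_1,\overline S_2$ are the resulting closed-form upper bounds.

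\textbf{Bounding $S_2$.} Substituting the bounds termwise,
\[
S_2\le \sum_{k\ge1}\frac{C^2\rho^{2k}}{\beta^{k-1}}\,\alpha^{2k-1}
=\frac{C^2\beta}{\alpha}\sum_{k\ge1}\Big(\frac{\rho^2\alpha^2}{\beta}\Big)^k .
\]
With $r_2:=\rho^2\alpha^2/\beta$, this equals $\frac{C^2\beta}{\alpha}\cdot\frac{r_2}{1-r_2}$, which is finite iff $r_2<1$, i.e.\ iff $\beta>\alpha^2\rho^2$.

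\textbf{Bounding $S_1$.} Similarly, writing $\alpha^{k-1/2}=\alpha^{-1/2}\alpha^k$ and $q_k^{-1/2}=\beta^{1/2}\beta^{-k/2}$,
\[
S_1\le C\sqrt{\beta/\alpha}\sum_{k\ge1}\Big(\frac{\rho\alpha}{\sqrt\beta}\Big)^k ,
\]
with ratio $r_1:=\rho\alpha/\sqrt\beta=\sqrt{r_2}$. Hence $r_1<1$ under the same condition $\beta>\alpha^2\rho^2$, and $S_1\le C\sqrt{\beta/\alpha}\,\frac{r_1}{1-r_1}$.

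Both bounds depend only on $\alpha,\beta,C,\rho$. Note also that $\alpha<1$ holds because $p_\pm\in(0,1)$, and $\alpha>1/2>0$, so the divisions by $\alpha$ are harmless.

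\textbf{Conclusion.} Lemma~\ref{lem:moment_simple} then gives $\E[\widehat\Delta^{\,2}]\le B^2+C_0\tau^2$. If moreover $\tau\le c B$, then $\E[\widehat\Delta^{\,2}]\le(1+C_0c^2)B^2=\mathcal O(B^2)$.

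There is essentially no hard step; the only care needed is bookkeeping of the exponents. One must make sure that the half-integer power $\alpha^{k-1/2}$ in $S_1$ and the square root $\sqrt{q_k}$ combine into the ratio $\rho\alpha/\sqrt\beta$, the square root of the $S_2$ ratio. This is what makes the single condition $\beta>\alpha^2\rho^2$ sufficient for both series. I would also check that unbiasedness (Lemma~\ref{lem:unbiased_simple}) is available, since Lemma~\ref{lem:moment_simple} uses $(\E\widehat\Delta)^2=\Delta^2\le B^2$. Its hypothesis $\sum_k|c_{2k-1}|\alpha^{2k-1}<\infty$ follows from $|c_{2k-1}|\le C\rho^k$ and $\rho\alpha^2<1$.
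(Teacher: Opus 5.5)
Your proposal is correct and follows the same route as the paper. You plug $|c_{2k-1}|\le C\rho^k$ and $q_k=\beta^{k-1}$ into $S_1,S_2$ from Lemma~\ref{lem:moment_simple}, obtaining geometric series with ratios $\alpha\rho/\sqrt{\beta}$ and $\alpha^2\rho^2/\beta$, both below $1$ exactly when $\beta>\alpha^2\rho^2$; your closed forms agree with the paper's up to reindexing, and your extra check of the unbiasedness hypothesis of Lemma~\ref{lem:unbiased_simple} is a small point the paper leaves implicit.
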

\end{propbox}

\begin{proof}
By Proposition~\ref{prop:general_link_simple} we may take $|c_{2k-1}|\le C\rho^k$.
With $q_k=\beta^{k-1}$, the quantities in Lemma~\ref{lem:moment_simple} satisfy
\[
S_2
=
\sum_{k\ge1}\frac{c_{2k-1}^2}{q_k}\,\alpha^{2k-1}
\le
C^2\,\alpha\sum_{k\ge1}\frac{\rho^{2k}}{\beta^{k-1}}\,\alpha^{2k-2}
=
C^2\,\alpha\,\rho^2\sum_{k\ge1}\Big(\frac{\alpha^2\rho^{2}}{\beta}\Big)^{k-1},
\]
and
$
S_1
=
\sum_{k\ge1}\frac{|c_{2k-1}|}{\sqrt{q_k}}\,\alpha^{k-\frac12}
\le
C\,\alpha^{1/2}\sum_{k\ge1}\frac{\rho^{k}}{\beta^{(k-1)/2}}\,\alpha^{k-1}
=
C\,\alpha^{1/2}\rho\sum_{k\ge1}\Big(\frac{\alpha\rho}{\sqrt{\beta}}\Big)^{k-1}.$
The condition $\beta>\alpha^2\rho^{2}$ implies both geometric series converge, hence $S_1,S_2<\infty$ and in fact
$S_1,S_2=\mathcal O(1)$ with constants depending only on $\alpha,\beta,C,\rho$. Applying Lemma~\ref{lem:moment_simple} yields
$
\E[\widehat\Delta^{\,2}]
\le
B^2 + 2\tau^2 S_2 + 2\tau^2 S_1^2
\le
B^2 + C_0\tau^2
$
for some constant $C_0=C_0(\alpha,\beta,C,\rho)$.
If $\tau=\mathcal O(B)$, then $\E[\widehat\Delta^{\,2}]=\mathcal O(B^2)$.
\end{proof}

\subsection{End-to-end comparison complexity for $(\delta,\varepsilon)$-Goldstein stationarity}
\label{sec:main_theorem}

We now combine the unbiased comparison-based gap estimator from Section~3.5 with a standard
nonconvex SGD guarantee to obtain an explicit \emph{expected comparison-complexity} bound.

\paragraph{Algorithm (Comparison-SGD on $f_\delta$).}
Initialize $x_0\in\R^d$. For $t=0,1,\dots,T-1$:
sample $u_t\sim \mathrm{Unif}(\mathbb{S}^{d-1})$, form an unbiased gap estimate
$\widehat{\Delta}_t := \widehat{\Delta}(x_t-\delta u_t,\;x_t+\delta u_t)$ using the
Russian-roulette procedure (with fresh comparisons), define
$
G_t \;:=\; G(x_t;u_t)\;=\;\frac{d}{2\delta}\,\widehat{\Delta}_t\,u_t,
$
and update $x_{t+1}=x_t-\eta G_t$. Output $x_R$ where $R$ is uniform on $\{0,\dots,T-1\}$
and independent of the algorithmic randomness.


\newtcolorbox{propboxwide}{
  enhanced,
  breakable,
  colback=white,
  colframe=black,
  boxrule=0.6pt,
  left=6pt,right=6pt,top=6pt,bottom=6pt,
  width=\linewidth
}

\paragraph{Smoothness of the smoothed objective.}
If $f$ is $L$-Lipschitz and $f_\delta(x)=\E_{v\sim\mathrm{Unif}(\mathbb{B}^d)}[f(x+\delta v)]$,
then $f_\delta$ is differentiable and $L_\delta$-smooth with $L_\delta = L\sqrt d/\delta$.

\begin{propbox}
\begin{lemma}[Nonconvex SGD with unbiased gradients]
\label{lem:sgd_generic}
Let $F:\R^d\to\R$ be $L_F$-smooth and bounded below by $F^\star$.
Consider iterates $x_{t+1}=x_t-\eta G_t$ such that, for all $t$,
$
\E\!\left[G_t\,\middle|\,x_t\right] \;=\; \nabla F(x_t)$ and
$\E\!\left[\|G_t\|^2\,\middle|\,x_t\right] \;\le\; V,$
for some finite constant $V$. If $\eta\le 1/L_F$, then
$
\frac{1}{T}\sum_{t=0}^{T-1}\E\big[\|\nabla F(x_t)\|^2\big]
\;\le\; \frac{2(F(x_0)-F^\star)}{\eta T} \;+\; \eta L_F V.
$

\end{lemma}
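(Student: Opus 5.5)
The plan is the standard descent-lemma argument. First, from $L_F$-smoothness of $F$, for every $t$ we have
\[
F(x_{t+1}) \le F(x_t) + \langle \nabla F(x_t), x_{t+1}-x_t\rangle + \frac{L_F}{2}\|x_{t+1}-x_t\|^2
= F(x_t) - \eta\langle \nabla F(x_t), G_t\rangle + \frac{L_F\eta^2}{2}\|G_t\|^2 .
\]
Next I take the conditional expectation given $x_t$. Unbiasedness turns the inner product into $\|\nabla F(x_t)\|^2$, and the second-moment bound controls the last term. This gives
\[
\E[F(x_{t+1})\mid x_t] \le F(x_t) - \eta\|\nabla F(x_t)\|^2 + \frac{L_F\eta^2}{2}V .
\]
Taking total expectations via the tower property, summing over $t=0,\dots,T-1$, and telescoping gives
\[
\eta\sum_{t=0}^{T-1}\E\|\nabla F(x_t)\|^2 \le F(x_0)-\E[F(x_T)] + \frac{L_F\eta^2 T V}{2} \le F(x_0)-F^\star + \frac{L_F\eta^2TV}{2}.
\]
Dividing by $\eta T$ gives $\frac{F(x_0)-F^\star}{\eta T} + \frac{\eta L_F V}{2}$. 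This is at most the stated bound, since each term there carries an extra factor of $2$. The condition $\eta\le 1/L_F$ is therefore not needed for this route. I keep it only because the weaker form of the bound matches the statement; if one expands $\|G_t\|^2$ as $\|\nabla F\|^2$ plus a variance term, it absorbs $\tfrac{L_F\eta^2}{2}\|\nabla F\|^2 \le \tfrac{\eta}{2}\|\nabla F\|^2$.

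The main point needing care is integrability, so that the conditional expectations and the telescoping are legitimate. $F(x_t)$ must be integrable for each $t$. I would show this by induction. $F(x_0)$ is deterministic. Smoothness gives $|F(x_{t+1})-F(x_t)| \le \|\nabla F(x_t)\|\eta\|G_t\| + \tfrac{L_F}{2}\eta^2\|G_t\|^2$. Conditional Jensen gives $\|\nabla F(x_t)\|^2 = \|\E[G_t\mid x_t]\|^2 \le V$, and $\E\|G_t\|^2\le V$. So each increment is integrable, and $F(x_{t+1})$ is integrable.

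Finally, the telescoping step uses $\E[F(x_T)]\ge F^\star$, which holds because $F$ is bounded below. Hence the average $\frac1T\sum_t\E\|\nabla F(x_t)\|^2$ obeys the claimed bound. Since $R$ is uniform and independent, this average also equals $\E\|\nabla F(x_R)\|^2$, which is the form used downstream.
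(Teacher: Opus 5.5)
Your argument is correct and is the standard descent-lemma and telescoping proof; the paper states this lemma without proof and implicitly relies on exactly this argument. Your integrability induction, including $\|\nabla F(x_t)\|^2\le V$ via conditional Jensen, closes the only technical gap. You are also right that using $\E[\|G_t\|^2\mid x_t]\le V$ directly gives the sharper bound $\frac{F(x_0)-F^\star}{\eta T}+\frac{\eta L_F V}{2}$ for every $\eta>0$. The factor $2$ and the restriction $\eta\le 1/L_F$ in the statement come from the variance-decomposition variant, which is needed only when one has a variance bound rather than a second-moment bound.
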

\end{propbox}
\par\smallskip

\paragraph{Main theorem (expected comparison complexity).}
We instantiate Lemma~\ref{lem:sgd_generic} with $F=f_\delta$ and the estimator $G_t$ above.
By Section~3.5, $\E[G_t\mid x_t]=\nabla f_\delta(x_t)$.
Assume moreover that the gap estimator has a uniform conditional second-moment bound:
\begin{equation}
\label{eq:gap_second_moment}
\E\!\left[\widehat{\Delta}_t^{\,2}\,\middle|\,x_t,u_t\right] \;\le\; C_\Delta\,B^2
\qquad\text{for all }t,
\end{equation}
where $B:=2L\delta$ (since $|\Delta(x_t,u_t)|\le 2L\delta$ for an $L$-Lipschitz $f$)
and $C_\Delta<\infty$ is a constant.

\leavevmode\par\smallskip
\begin{propbox}
\begin{theorem}[Complexity bound for $(\delta,\varepsilon)$-Goldstein stationarity]
\label{thm:comparison_complexity}
Assume $f$ is bounded below and $L$-Lipschitz, and fix $\delta>0$.
Run Comparison-SGD on $f_\delta$ for $T$ steps with step size
$
\eta \;\le\; \frac{1}{L_\delta}\;=\;\frac{\delta}{L\sqrt d},
$
and suppose \eqref{eq:gap_second_moment} holds for some $C_\Delta<\infty$.
Let $f_\delta^\star:=\inf_x f_\delta(x)$ and $\Delta_0 := f_\delta(x_0)-f_\delta^\star$.
Then for $R\sim\mathrm{Unif}\{0,\dots,T-1\}$ independent,
\[
\E\big[\|\nabla f_\delta(x_R)\|^2\big]
\;\le\;
\frac{2\Delta_0}{\eta T} \;+\; \eta L_\delta \cdot
\frac{d^2}{4\delta^2}\,C_\Delta B^2.
\]
In particular, choosing
$
\eta \;=\; \min\!\left\{\frac{1}{L_\delta},\;
\sqrt{\frac{2\Delta_0}{L_\delta T \cdot \frac{d^2}{4\delta^2}C_\Delta B^2}}\right\}
$
yields
\[
\E\big[\|\nabla f_\delta(x_R)\|^2\big]
\;\le\;
2\sqrt{\frac{2L_\delta \Delta_0}{T}\cdot
\frac{d^2}{4\delta^2}C_\Delta B^2}\,.
\]
Consequently, it suffices to take
$
T \;\ge\;
\frac{8L_\delta \Delta_0}{\varepsilon^4}\cdot
\frac{d^2}{4\delta^2}C_\Delta B^2$
to ensure 
$\E[\|\nabla f_\delta(x_R)\|]\le \varepsilon$.
By Lemma~\ref{lem:smoothing_bridge_goldstein}, this implies that $x_R$ is $(\delta,\varepsilon)$-Goldstein stationary.

Moreover, if each gap estimate uses a (random) number $N$ of comparisons with $\E[N]<\infty$,
then the expected total number of comparisons used by the algorithm is at most
\[
\E[\#\text{comparisons}] \;\le\; T\cdot \E[N].
\]
\end{theorem}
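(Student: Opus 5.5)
The proof instantiates Lemma~\ref{lem:sgd_generic} with $F=f_\delta$, $L_F=L_\delta=L\sqrt d/\delta$ and $F^\star=f_\delta^\star$. We first check the hypotheses. $f_\delta$ is bounded below because $f$ is: $f_\delta(x)=\E[f(x+\delta V)]\ge\inf f$, so $\Delta_0<\infty$. Smoothness with constant $L_\delta$ is the stated property of ball smoothing. Unbiasedness $\E[G_t\mid x_t]=\nabla f_\delta(x_t)$ is obtained by conditioning first on $(x_t,u_t)$. By Lemma~\ref{lem:unbiased} (or Lemma~\ref{lem:unbiased_simple} for general links), $\E[\widehat\Delta_t\mid x_t,u_t]=\Delta(x_t,u_t)$, using fresh comparisons and a truncation level independent of the past. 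We then integrate over $u_t$ with the two-point smoothing identity. For the second moment, $\|u_t\|=1$ gives
\[
\E[\|G_t\|^2\mid x_t]=\frac{d^2}{4\delta^2}\,\E\big[\E[\widehat\Delta_t^2\mid x_t,u_t]\,\big|\,x_t\big]\le \frac{d^2}{4\delta^2}C_\Delta B^2=:V,
\]
by \eqref{eq:gap_second_moment}. Applying Lemma~\ref{lem:sgd_generic} and noting that $\E\|\nabla f_\delta(x_R)\|^2=\frac1T\sum_t\E\|\nabla f_\delta(x_t)\|^2$, because $R$ is uniform and independent of the iterates, gives the first bound.

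Next we optimize the step size. The right-hand side $a/\eta+b\eta$, with $a=2\Delta_0/T$ and $b=L_\delta V$, is minimized at $\eta^\ast=\sqrt{a/b}$, where it equals $2\sqrt{ab}=2\sqrt{2L_\delta\Delta_0V/T}$. We split into two cases. If $\eta^\ast\le 1/L_\delta$, then $\eta=\eta^\ast$ is admissible and the bound is immediate. Otherwise $\eta=1/L_\delta<\eta^\ast$, so $b\eta<b\eta^\ast=\sqrt{ab}$ and $a/\eta=aL_\delta$. In this case $\eta^\ast>1/L_\delta$ means $a>b/L_\delta^2$, and this alone does not bound $aL_\delta$ by $\sqrt{ab}$. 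This is the main obstacle: the stated bound really holds only in the regime $T\gtrsim L_\delta\Delta_0/V$, where $\eta^\ast\le1/L_\delta$. We will treat that regime as the relevant one. The required $T\ge 8L_\delta\Delta_0V/\varepsilon^4$ lies in this regime whenever $\varepsilon^4\lesssim V^2$, which holds for small $\varepsilon$ because $V=\Theta(d^2L^2C_\Delta)$ typically dominates $\varepsilon^2$. We will state this condition explicitly, or alternatively keep the general bound $2\sqrt{ab}+aL_\delta$. The iteration count then follows from solving $2\sqrt{2L_\delta\Delta_0V/T}\le\varepsilon^2$, which gives $T\ge 8L_\delta\Delta_0V/\varepsilon^4$. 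Jensen's inequality then yields $\E\|\nabla f_\delta(x_R)\|\le(\E\|\nabla f_\delta(x_R)\|^2)^{1/2}\le\varepsilon$.

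For the Goldstein conclusion we invoke Lemma~\ref{lem:smoothing_bridge_goldstein}, which gives $\mathrm{dist}(0,\partial_\delta f(x_R))\le\|\nabla f_\delta(x_R)\|$. So the guarantee holds in expectation, $\E[\mathrm{dist}(0,\partial_\delta f(x_R))]\le\varepsilon$. A high-probability version follows from Markov's inequality, and we will phrase the conclusion this way.

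Finally, for the comparison count, let $N_t$ be the number of comparisons at step $t$. Its conditional law given the past is that of $N$: it depends only on the independently drawn truncation level $M$, as in Lemma~\ref{lem:cost}. Linearity of expectation, or the tower property, then gives $\E[\sum_{t<T}N_t]=T\,\E[N]$. This holds with equality, which in particular gives the stated upper bound.
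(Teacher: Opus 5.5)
Your overall route is the paper's. You instantiate Lemma~\ref{lem:sgd_generic} with $F=f_\delta$, $L_F=L_\delta$ and $V=\frac{d^2}{4\delta^2}C_\Delta B^2$, balance the step size, and use Jensen; then Lemma~\ref{lem:smoothing_bridge_goldstein} and linearity give the comparison count. The hypothesis checks, the in-expectation reading of the Goldstein conclusion, and the counting argument are all fine.

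The gap is in the step-size case analysis. You correctly observe that when $\eta^\ast=\sqrt{a/b}>1/L_\delta$, the lemma's bound at $\eta=1/L_\delta$ equals $aL_\delta+b/L_\delta\ge 2\sqrt{ab}$. So the lemma alone cannot deliver the claimed inequality, a point the paper does not spell out. But your conclusion that the stated bound ``really holds only in the regime $T\gtrsim L_\delta\Delta_0/V$'' is wrong. So is the plan to add a condition on $\varepsilon$ or settle for $2\sqrt{ab}+aL_\delta$. As written, your argument proves a weaker theorem than the one stated. The appeal to $V$ ``typically'' dominating $\varepsilon^2$ is not a proof step.

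The missing idea is a trivial a priori bound on the gradient itself. Since $G_t$ is conditionally unbiased, conditional Jensen gives, for every $t$,
\[
\|\nabla f_\delta(x_t)\|^2=\big\|\E[G_t\mid x_t]\big\|^2\le \E\big[\|G_t\|^2\mid x_t\big]\le V .
\]
Hence $\E\|\nabla f_\delta(x_R)\|^2\le V$ for any step size. In the bad case, $\eta^\ast>1/L_\delta$ means $a>V/L_\delta$, so $\sqrt{ab}=\sqrt{aL_\delta V}>V$. Therefore $2\sqrt{ab}>2V\ge\E\|\nabla f_\delta(x_R)\|^2$, and the second display holds for every $T$.

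The threshold $T\ge 8L_\delta\Delta_0V/\varepsilon^4$ then follows with no restriction on $\varepsilon$. Equivalently: if $\varepsilon^2\ge V$ the conclusion is immediate. If $\varepsilon^2<V$, such a $T$ satisfies $T>8L_\delta\Delta_0/V$, so it automatically lies in the regime $\eta^\ast\le 1/L_\delta$. With this one line added, your proof is complete.
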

\end{propbox}
\par\smallskip

\paragraph{Explicit specialization under $L_\delta=L\sqrt d/\delta$ and geometric truncation.}
Under \eqref{eq:gap_second_moment} with $B=2L\delta$,
\[
\E\!\left[\|G_t\|^2 \,\middle|\, x_t,u_t\right]
\;=\;
\frac{d^2}{4\delta^2}\E\!\left[\widehat{\Delta}_t^{\,2}\,\middle|\,x_t,u_t\right]
\;\le\;
\frac{d^2}{4\delta^2}\,C_\Delta(2L\delta)^2
\;=\;
C_\Delta d^2 L^2,
\]
so Theorem~\ref{thm:comparison_complexity} gives
$
T \;\ge\; \frac{8L_\delta \Delta_0}{\varepsilon^4}\cdot C_\Delta d^2L^2
\;=\;
\frac{8\,C_\Delta\, d^{5/2} L^3\,\Delta_0}{\delta\,\varepsilon^4}.
$
For the logistic construction with geometric truncation parameter $\beta$,
Section~\ref{sub_trunc} gives $\E[N]=1/(1-\beta)^2$, hence the expected comparison complexity is
$
\E[\#\text{comparisons}]
\;\le\;
T\cdot\E[N]
\;=\;
O\!\left(
\frac{C_\Delta\, d^{5/2} L^3\,\Delta_0}{\delta\,\varepsilon^4}\cdot
\frac{1}{(1-\beta)^2}
\right).$

\section{Related Work}

\textbf{Comparison-Based Optimization.}
Optimization utilizing only relative preferences has been extensively studied under the umbrella of dueling bandits and derivative-free optimization. In the \emph{convex} setting, theoretical guarantees are well-established for both noiseless and noisy comparison oracles~\cite{pmlr-v139-saha21b,saha2025dueling}. For \emph{smooth nonconvex} functions, earlier works focused on deterministic feedback or asymptotic convergence~\cite{bergou2020stochastic}. More recently, \cite{tang2024zerothorder} and \cite{kadi2025on} derived non-asymptotic rates for smooth nonconvex optimization using ranking and comparison feedback, respectively. Most recently, \cite{bakkali2026noisy} extended these results to the stochastic regime with noisy comparisons. However, all these works rely on the smoothness of the underlying objective to control the linearization error of the gradient estimators. Our work departs from this by addressing the \emph{nonsmooth} regime, where gradients may not exist and standard smooth comparison-based estimators fail.

\textbf{Nonsmooth Zeroth-Order Optimization.}
For nonsmooth Lipschitz functions, the standard solution concept is $(\delta, \varepsilon)$-Goldstein stationarity~\citep{goldstein1977optimization}. In the setting of \emph{value-based} zeroth-order optimization (where algorithms observe $f(x)$), \cite{lin2022gradient} pioneered the use of randomized smoothing to convert the nonsmooth problem into a smooth surrogate problem, proving convergence for deterministic oracles. This was extended to stochastic value oracles by \cite{pmlr-v195-jordan23a}. Crucially, \textbf{\cite{kornowski2023optimal} established the optimal dimension-dependence for this setting}, refuting earlier conjectures regarding the necessary scaling with dimension. On the fundamental limits of this regime, \textbf{\cite{zhang2020complexity} established tight lower bounds}, confirming the hardness of the problem and the necessity of the Goldstein stationarity relaxation. Our approach leverages the randomized smoothing geometry used in these works but fundamentally differs in the estimation step: we cannot observe function differences directly. Instead, we introduce a novel unbiased estimator for the inverse link function to bridge the gap between binary comparison feedback and the continuous value differences required by the Goldstein stationary framework.
\section{Conclusion and Discussion}

In this work, we presented the first comparison-based optimization algorithm with non-asymptotic convergence guarantees for nonsmooth, nonconvex Lipschitz functions. By targeting $(\delta, \epsilon)$-Goldstein stationarity, we circumvented the ill-defined nature of gradients in the nonsmooth regime. Our key technical innovation is an unbiased estimator for local function differences that operates solely on binary comparison feedback. Through a Russian-roulette truncation of the inverse link's Bernoulli expansion, we achieved an estimator with finite expected cost and variance scaling as $\mathcal{O}(B^2)$. This variance control allows our method to match the convergence rate of value-based zeroth-order methods (up to constants depending on dimension and link parameters), scaling as $\mathcal{O}(d^{5/2}\delta^{-1}\epsilon^{-4})$.

\textbf{Limitations and Future Work.}
Our current analysis relies on the knowledge of the link function $\sigma$ to construct the inverse expansion. Extending this to \emph{unknown} or semi-parametric link models is an important open direction. Additionally, while the $\epsilon^{-4}$ dependence is standard for SGD on smooth surrogates, acceleration techniques for Goldstein stationarity remains a challenging frontier. Finally, our method assumes unconstrained optimization on $\mathbb{R}^d$; adapting the randomized smoothing and comparison geometry to constrained settings or Riemannian manifolds would broaden the applicability of these results to complex control and ranking problems.
\bibliographystyle{plainnat}
\bibliography{references}

\end{document}